	\providecommand\BibTeX{{%
			\normalfont B\kern-0.5em{\scshape i\kern-0.25em b}\kern-0.8em\TeX}}}
\newcommand*{\GtrSim}{\smallrel\gtrsim}
\newcommand*{\smallrel}[2][.85]{%
  \mathrel{\mathpalette{\smallrel@{#1}}{#2}}%
}
\newcommand*{\smallrel@}[3]{%
  \sbox0{$#2\vcenter{}$}%
  \dimen@=\ht0 %
  \raise\dimen@\hbox{%
    \scalebox{#1}{%
      \raise-\dimen@\hbox{$#2#3\m@th$}%
    }%
  }%
}
\title{Fundamental Limits of Testing the Independence of Irrelevant Alternatives in Discrete Choice}
\author{Arjun Seshadri}
\email{aseshadr@stanford.edu}
\affiliation{%
	\institution{Stanford University}
	\streetaddress{450 Serra Mall}
	\city{Stanford}
	\state{California}
	\postcode{94305}
	\country{USA}
}
\author{Johan Ugander}
\email{jugander@stanford.edu}
\affiliation{%
	\institution{Stanford University}
	\streetaddress{450 Serra Mall}
	\city{Stanford}
	\state{California}
	\postcode{94305}
	\country{USA}}
\begin{document}

	\newcommand{\xhdr}[1]{\paragraph*{\bf #1}}

	\newcommand\allsubsets{\mathcal{C}}
	\newcommand\uniqueD{\mathcal{C}_{\mathcal{D}}}
	\newcommand\Var{\text{Var}}
	\newcommand\Cov{\text{Cov}}
	\newcommand\nper{{n_{\text{per}}}}
	\newcommand\eff{\textbf{f}}
	\newcommand\exx{\mathcal{X}}
	\newcommand{\tv}[1]{||#1||_{\text{TV}}}
	\newtheorem{defn}{Definition}
	\newtheorem{thm}{Theorem}
	\newtheorem{cor}{Corollary}
	\newtheorem{ass}{Assumption}
	\newtheorem{question}{Question}
	
	\newcommand\Pall{\mathcal{P}_\mathcal{C}}
	\newcommand\Piia{\mathcal{P}^{\text{IIA}}_\mathcal{C}}
	\newcommand\Mdelta{\mathcal{M}_{\delta,\mathcal{C}}}
	\newcommand\Bset{\mathcal{B}_\mathcal{C}}
	\newcommand\qbe{q_{b,\epsilon}}
	\newcommand\qbpe{q_{b',\epsilon}}

	\newcommand{\jucom}[1]{\textcolor{red}{[#1]}}
	\newcommand{\ju}[1]{\textcolor{red}{#1}}
	\newcommand{\ascom}[1]{\textcolor{blue}{[#1]}}
	\newcommand{\kk}[1]{{\color{red}#1}}

	\begin{abstract}
		The Multinomial Logit (MNL) model and the axiom it satisfies, the Independence of Irrelevant Alternatives (IIA), are together the most widely used tools of discrete choice. The MNL model serves as the workhorse model for a variety of fields, but is also widely criticized, with a large body of experimental literature claiming to document real-world settings where IIA fails to hold. Statistical tests of IIA as a modelling assumption have been the subject of many practical tests focusing on specific deviations from IIA over the past several decades, but the formal size properties of hypothesis testing IIA are still not well understood. In this work we replace some of the ambiguity in this literature with rigorous pessimism, demonstrating that any general test for IIA with low worst-case error would require a number of samples exponential in the number of alternatives of the choice problem. A major benefit of our analysis over previous work is that it lies entirely in the finite-sample domain, a feature crucial to understanding the behavior of tests in the common data-poor settings of discrete choice.
		Our lower bounds are structure-dependent, and as a potential cause for optimism, we find that if one restricts the test of IIA to violations that can occur in a specific collection of choice sets (e.g., pairs), one obtains structure-dependent lower bounds that are much less pessimistic. Our analysis of this testing problem is unorthodox in being highly combinatorial, counting Eulerian orientations of cycle decompositions of a particular bipartite graph constructed from a data set of choices. By identifying fundamental relationships between the comparison structure of a given testing problem and its sample efficiency, we hope these relationships will help lay the groundwork for a rigorous rethinking of the IIA testing problem as well
		as other testing problems in discrete choice.
	\end{abstract}
	\maketitle
	
	\section{Introduction}
	A common goal for a wide variety of social-scientific pursuits is to infer preferences from the choices individuals make given a limited, discrete set of alternatives. Towards this goal, \textit{probabilistic discrete choice modelling} 
	is broadly applied in the study of
	transportation \cite{domencich1975urban}, 
	industrial organization \cite{berry1995automobile},
	marketing \cite{allenby1998marketing}, 
	network formation \cite{overgoor2019choosing}, 
	recommender systems \cite{resnick1997recommender}, 
	search engine ranking \cite{schapire1998learning}, 
	and other judgement or decision problems that are discrete in nature. 
 	
	The study of discrete choice focuses on the following problem setting. A decision maker, either an individual or a representative of a population, chooses an alternative $x$ from a choice set $C$ of two or more alternatives. The set of alternatives is a subset of a broader universe $\mathcal{X}$ of $n$ alternatives.  The choice set captures the restrictions faced by the chooser at the time the decision is made, either by design or due to constraints. The goal in modelling discrete choice is then to determine how choices are made by a decision maker (or a group of decision makers) across a specific collection $\mathcal{C}$ of unique subsets of $\mathcal{X}$. The collection $\mathcal{C}$ may be the collection of all subsets of $\mathcal{X}$, may be the collection of all pairs or of all triplets, or may be some empirically determined collection, and is generally treated as exogenous to the problem. The goal of probabilistic modelling is to model with what \textit{probability} each item $x$ will be chosen from each set $C \in \mathcal{C}$. The corresponding challenge of inference is to recover a model given a dataset of subsets and choices from those subsets (and the identity of the decision makers, if heterogeneous).
	
	Perhaps the most prevalent approach to modelling discrete choice is to assign a latent utility to all $n$ items of the universe, and then model decision making as a noisy maximization of the item utilities in a given choice set. The conditional Multinomial Logit (MNL) model \cite{mcfadden1977application}, also known as the Bradley-Terry-Luce model \cite{bradley1952rank,luce1959individual}, is the best known among these Random Utility Models \cite{falmagne1978representation} that follow this utility-based approach. Indeed, the MNL model is the foundational workhorse of probabilistic discrete choice modelling for settings both small and large in scale. 
	
	The MNL model is widely described as modelling ``rational'' discrete choice, a position that can be motivated in several ways. 
	A common motivation is a willingness to assume the axiom of the Independence of Irrelevant Alternatives (IIA) \cite{luce1959possible}, also known as Luce's choice axiom. A motivating consequence of IIA is that under IIA, adding an item to a choice set 
	does not change the relative probabilities of choosing the existing items in the original set. 
	Alternative motivations for the MNL model include a willingness to assume a random utility model along with either independent Gumbel noise, which characterizes the MNL model~\cite{manski1977structure} or by assuming an independent random utility model that satisfies the axiom of 
	uniform expansion~\cite{yellott1977relationship}. 
	Regardless of the motivation, the MNL model is unique among all probabilistic models of discrete choice in satisfying IIA, so IIA can be thought of as a complete characterization of the model within the space of all discrete choice models.
	
	The above idea that the MNL model is typically adopted as a result of careful deliberations about underlying assumptions is very generous towards practitioners: more often, the MNL model is adopted simply because it is easy to work with. It is easy to learn from data (the log-likelihood is convex, and the number of parameters scale linearly with $n$), it is easy to employ in prediction tasks (predicted choice probabilities have a simple functional form), and it is easy to interpret (the parameters of the model are interpretable as relative utilities of the items in the choice problem).
	
	Given that IIA fully characterizes the MNL model, the widespread popularity of the model has directed a great deal of attention to varied evaluations of whether IIA holds in a given context or dataset. A large body of experimental evidence claims to document significant deviations from IIA across many domains. Examples of specific deviations that have received considerable attention include the compromise effect \cite{simonson1989choice}, asymmetric dominance \cite{huber1982adding}, and the similarity effect \cite{tversky1972elimination}. As a result of these observations, which at least date back to Debreu's book review~\cite{debreu1960review} of Luce's original monograph, researchers have taken to developing models of discrete choice that eschew IIA; examples include the Elimination By Aspects (EBA) model \cite{tversky1972elimination},
	nested models \cite{mcfadden1978modeling}, mixture models \cite{mcfadden1975aggregate,mcfadden2000mixed}, Markov chain models \cite{blanchet2016markov, ragain2016pairwise}, and contextual random utility models \cite{simonson1989choice,batsell1985new,chen2016modeling,seshadri2019raw}.
	
	The above evaluations and modelling efforts have in no way discouraged practitioners from using the MNL model, even at modest item sizes 
	\cite{lurkin2017accounting,raval2019machine}.
	One reason is that the models that eschew IIA have almost always been more complex than the MNL model, either in the number of parameters (sometimes scaling combinatorially in $n$ \cite{park2013theoretical}) or in inferential difficulty \cite{keane1992note, chierichetti2018learning, benson2016relevance}, and their use thus requires specific justification. Meanwhile, most empirical efforts to reject IIA do so in settings of, e.g., three items and two subsets (a pair and the full triplet) \cite{simonson1989choice, huber1982adding, tversky1972elimination}. These demonstrations are highly illustrative, but not cleanly transferable to motivating problems with many more alternatives\footnote{While IIA violations in one setting could be an indicator of violations in another, different deviations studied often conflict with each other in direction and magnitude, making predictions difficult over a large number of unique subsets.}. This is to say, the small-scale demonstrations fall short of formally {\it testing} whether IIA holds for a general discrete choice problem. The demonstrations lack the objectivity, transparency, and interpretability that (properly used) testing procedures provide in practice \cite{johari2015always}. While formal tests for IIA do exist, such as the early proposals of McFadden, Tye, and Train \cite{mcfadden1977application} and of Hausman and McFadden ~\cite{hausman1984specification}, these tests have been problematic in practice, as discussed further in the prior work discussion within this introduction.
	When IIA cannot be rejected with confidence for a given problem, it is natural to default to the MNL model. Should the deviations from IIA that have been documented in small problem instances indeed hold for larger instances, these deviations are being largely ignored by many of today's practitioners. 

	In this work we ask: given a dataset of choices from choice sets within a collection $\mathcal{C}$, how efficiently can IIA be tested in a general $n$ alternative setting? The main difficulty with such a test lies in the combinatorial nature of discrete choice problems. Whereas it is well known that the MNL model can be inferred with samples scaling only in $n$ under IIA  \cite{negahban2012iterative, shah2016estimation, agarwal2018accelerated}, a test of IIA is inherently a matter of model \textit{misspecification}. For IIA to not hold, a sufficient condition is a departure from otherwise IIA-consistent choices by the choice probability of a single item on a single subset within the collection $\mathcal{C}$. A collection $\mathcal{C}$ with many subsets, notably subsets of large size, could then permit many distinct ways for departing from IIA and render all tests of IIA sample-intensive. This challenges follows what is sometimes called the Anna Karenina principle\footnote{The opening line of Tolstoy's {\it Anna Karenina} being: ``Happy families are all alike; every unhappy family is unhappy in its own way.'' \cite{Tolstoy}} of high-dimensional hypothesis testing: all nulls are alike, but deviations from the null all deviate in their own way. In our context, the principle translates to say that while there are only a few ways to be ``rational,'' there are a many unique ways that people can be ``irrational.''

	As part of our work, we do not focus on any particular test of IIA, but rather, we interrogate the sample complexity inherent to the IIA testing problem by bounding from below the worst case error of any IIA testing procedure. 
	Because we study the worst case error given a finite number of samples, our results are necessarily dependent on every set in $\mathcal{C}$ being allocated $\textit{some}$ of the samples. Were the structure of $\mathcal{C}$ ignored in this respect, with some sets receiving no samples, the worst case error of a test is always that of a random guess, since the "worst case" IIA deviation can be thought of as arising only in unsampled sets. 
	
	Our main contribution is to show that the sample complexity of testing IIA grows at least with the \textit{square root} of the sum of all subset cardinalities in the collection $\mathcal{C}$. This scaling yields the pessimistic outcome that testing for IIA over a collection with many and large subsets can quickly require samples exponential in the number of items $n$. Our bounds also yield room for optimism. In many practical settings, $\mathcal{C}$ only consists of specific pairs and triplets, not, e.g., all subsets of size seventeen. For these settings, where the sum of all subset cardinalities in the collection $\mathcal{C}$ scales mildly with $n$, our bounds suggest the possible existence of surprisingly efficient procedures for testing IIA. This efficiency stands in contrast to the classical chi-squared approaches that require an accurate estimate of the distributions over the subsets in $\mathcal{C}$ and hence samples \textit{linear} in the sum of subset cardinalities \cite{read2012goodness}. 
	
	Although traditional theoretical analyses show that $\chi^2$ tests for discrete distributions are rate-optimal in the minimax sense, these analyses (see \cite{lehmann2005testing}, pg.~593) treat the dimension as fixed and constant compared to a growing number of samples, and their resulting rates conceal the impact of problem size in relation to data. When dimensions are taken into account, estimation-based classical $\chi^2$ tests are no longer generally optimal, and tests are often feasible with far fewer samples than are required for estimation. This is shown in recent finite-sample analyses for a variety of basic testing problems over discrete distributions, including testing identity \cite{valiant2017automatic}, testing amongst monotone distributions \cite{wei2016sharp}, testing independence and many others properties \cite{daskalakis2018distribution}. The possible existence of efficient procedures for testing IIA for practical $\mathcal{C}$ connects our work to this broader project in  hypothesis testing properties of discrete distributions.

	\xhdr{Prior work on hypothesis testing IIA}
	The problem of testing IIA was first posed in Luce's influential book \cite{luce1959individual} introducing the MNL model. There, Luce considers a rudimentary test of whether the probability ratio of choosing two items from a two-item and three-item choice set are the same, as they should be under IIA. He performs an approximate calculation showing that estimating the two ratios with low variance in order to make an accurate claim about IIA seems to demand thousands of samples. 
	The first formal hypothesis tests for IIA were proposed in 1977 by McFadden, Tye, and Train \cite{mcfadden1977application}. These tests were later found to be asymptotically biased \cite{small1985multinomial} and inspired several alternatives. 
	
	All of these alternative tests---by Hausman-McFadden~\cite{hausman1984specification}, Small-Hsiao~\cite{small1985multinomial}, and Horowitz~\cite{horowitz1981identification}---rely on the same mechanism to detect violations: they construct tests for differences in parameter estimates between estimation done in the presence or absence of an item in its associated choice sets. McFadden \cite{mcfadden1987regression} has also proposed testing joint specifications in a regression framework. All these tests have been discovered to be problematic; Fry and Harris~\cite{fry1996monte,fry1998testing} first used simulations to show that the known tests have poor size properties, and that practical datasets were far too small to operate in their assumed asymptotic regime. Cheng and Long~\cite{cheng2007testing} later confirmed this finding through more rigorous simulation, but moreover demonstrated that data sampled from IIA models often reject the null hypothesis of this test even with extraordinarily large data sizes, and data sampled from other models often fail to reject. On the other hand, model based tests---tests that compare the MNL model to a more general model\footnote{Typical examples include the Generalized Extreme Value Model, Nested Logit, and the Mixed Multinomial Logit.} that nests the MNL model---are often unused because the models that eschew IIA suffer from computational intractability, or issues with identifiability \cite{cheng2007testing}. These results have since influenced folklore~\cite{long2005regression} which advises researchers against using existing tests for IIA. While we do not propose any new tests in this work---only lower bounds on the sample complexity of the best possible tests---we hope that our lower bound results (and proof techniques) can open the door to new constructive tests that test IIA rigorously, if not also efficiently. 
	
	\xhdr{Prior graphical work on discrete choice}
	Our work's graphical treatment of discrete choice models is not unique in the literature. Namely, we are not the first to use graph properties to provide finite sample minimax lower bounds. In \cite{negahban2012iterative}, random walks on pairwise comparison graphs form the basis for interpreting and analyzing the paper's main RankCentrality algorithm for inferring BTL model parameters. In \cite{shah2016estimation}, eigenvalues of a comparison graph's Laplacian are used to prove the minimax optimality of the maximum likelihood estimator for the MNL model. The same problem is approached in \cite{hajek2014minimax}, however with a greater emphasis on spectral gaps and graph degree distributions. Finally, Markov chains constructed from comparison graphs form the basis of multiple estimation algorithms, including \cite{maystre2015fast} and \cite{agarwal2018accelerated}. Every one of these works focuses on the estimation of MNL models, as opposed to testing their validity, the focus of our work. Novel about our work is the study of cycles in our comparison incidence graphs as the property crucial to departures from IIA. This property should not be confused with the cyclic monotonicity property of the MNL model used in \cite{shi2018estimating} to construct a consistent estimator for the model, as cyclic monotonicity is a convex-analytic property that holds true for MNL regardless of the comparison graph.
	
	\xhdr{The present work}
	
	We provide the first formal lower bounds on the complexity of testing IIA. To do so, we leverage  recent progress in the finite sample analysis of the complexity of testing discrete distributions over the last decade \cite{paninski2008coincidence,valiant2017automatic,wei2016sharp,acharya2015optimal} across the statistics, information theory, and theoretical computer science literatures. The first step in our approach is a testing relaxation that studies a statistically and analytically simpler test than the IIA test, and requires a careful construction stemming from a desire to keep the sufficient statistics of the MNL model unchanged. The relaxed test is then connected to directing cycles of a particular {\it comparison incidence graph} $G_\mathcal{C}$ based on the collection $\mathcal{C}$, special properties of which always ensure a cycle decomposition. It is then shown, through a mixture of statistical and combinatorial analysis, that the cycles help guarantee (through their size properties) many orthogonal departures from IIA, furnishing a high lower bound for the IIA testing problem. Notable in our analysis is the key role of the structure of the collection of unique comparison sets $\mathcal{C}$ where the IIA property is being tested. 
	
	Our main result is then a minimax lower bound on testing IIA that, informally, lower bounds the error of the best possible test on the worst case problem instance for a given comparison setting and a given number of samples. The main consequence of our lower bound is that the worst-case sample complexity of the IIA testing problem is lower-bounded by a quantity proportional to $\sqrt{d}$, where $d$ is the sum of the sizes of the subsets in $\mathcal{C}$. In the case where a researcher considers all settings where IIA could be violated, that is, over all possible subsets of a universe set of $n$ items, size 2 or greater, $d$ is exponential in $n$. Thus, testing for IIA, in its most general form, has a worst case sample complexity that is at least exponential in the number of items. 
	
	A secondary consequence of our lower bound is one of optimism (or at least a lack of pessimism) when $d$ is small. Although the discrete choice literature treats IIA as one general property capturing the relationships within a set of items across all subsets, we develop our analysis in a way that makes it possible to discuss restricted notions of IIA, where we may only test IIA as it applied to a given collection of choice sets. As discussed earlier, if a discrete choice task only involves choices from, e.g., sets of size two and three, we are not really concerned with violations of IIA that may involve specific pathological sets of size seventeen. In developing minimax lower bounds that are collection-specific, we can ask what limitations may exist (what the sample complexity is) for testing these specific types of violations of IIA. In short, this framework lets us investigate limits on ``problem-relevant irrationalities'', setting aside the overwhelming number of ways a person can be irrational without relevance for a given problem. For the specific case of pairwise comparisons, the lower bound furnished by our analysis is rather mild, scaling linearly in the number of items. We also study a particular cyclical comparison structure we dub the single big cycle, which results in lower bounds that are entirely ``dimension-free''. 
	Our lower bound can therefore be seen as optimistic in the cases of certain comparison structures: rationality is much easier to test if you restrict the number of irrationalities it is up against. 
	
	We organize the paper as follows. In Section~\ref{sec:preldef}, we introduce the discrete choice problem setting by defining primitives that jointly capture the statistical and combinatorial aspects of the problem. Along the way, we formally describe the Independence of Irrelevant Alternatives (IIA), and characterize discrete choice settings that violate it. In Section~\ref{sec:lowerbounds}, we introduce the quantity central to the fundamental limits of testing IIA---the minimax risk---and state the paper's main result. In Sections 4 and 5, we develop the foundational lemmas that underlie this main result, constructing first a test that is statistically simpler than a test for IIA, and then proving a lower bound on the difficulty of the simpler test. We combine these lemmas in Section 6 to prove the paper's main result. Finally, in Section 7, we explore consequences of the result, taking advantage of the structure-dependent aspects to identify testing scenarios where the lower bounds furnish particularly pessimistic and optimistic perspectives.
	\section{Preliminary definitions}
	\label{sec:preldef}
	
	To analyze the IIA testing problem, we require several new perspectives on how to encode the structure of a decision maker's behavior (IIA or not) as a mathematical object. The standard object of study for analyzing the discrete choices of an agent is the collection of probability distributions that describe an agent's choices over every subset of a universe of alternatives, together with the separate probability distribution that controls the frequencies of which sets are being queried. As a first step, in Section~\ref{subsection:choice_system} we introduce a new perspective on this union of objects as a single constrained high-dimensional discrete (i.e., categorical, multinomial) distribution. With this perspective, we are then able to define the property of satisfying IIA as a set of (non-linear) constraints on this high-dimensional object.
	
	A second perspective that we introduce in Section~\ref{subsection:comparison_graph} is the idea of a comparison graph that captures the structural richness of $\mathcal{C}$, the collection of unique comparison sets in the dataset of interest. This comparison graph can capture the structure of arbitrary sets with arbitrary sizes, and differs from the standard study of ``comparison graphs'' from pairwise comparisons \cite{ford1957solution}. We take this new perspective because properties native to this graph---chiefly, the average cycle length and a concept we term the {\it cycle dispersion index}---map directly to the fundamental limitations of testing IIA for a given collection $\mathcal{C}$.
	
	\subsection{Choice systems}\label{subsection:choice_system}
	Let $\mathcal{X}$ be a finite set of $n$ \textit{items} with generic element $x \in \mathcal{X}$, and let $\mathcal{C} \subseteq 2^\mathcal{X}$ denote the observed space, a collection of $m \geq n$ unique unordered subsets $C \subseteq \mathcal{X}$ of size 2 or greater. A decision maker who is presented with a \textit{choice set} $C \in \mathcal{C}$ chooses one item from the set. We let $P_{x,C}$ denote the probability that $x$ is chosen from $C$, $\forall x \in C$, and let $w(C)$ denote the probability of seeing choice set $C$, $\forall C \in \mathcal{C}$. The object $\mathcal{C}$ should be treated as a sample frame---the regime of sets of interest, or the regime of sets for which a notion of error is important---determined ahead of time (as opposed to being defined in terms of the sets so far observed in some dataset). That is, we are operating in a fixed-design setting, where $\mathcal{C}$ does not depend on, e.g., prior observations. 
	
	Our test is focused on a given dataset $\mathcal{D}_N = \lbrace (x_j, C_j) \rbrace_{j=1}^N$ that results from a decision maker making choices: a datapoint $j$ represents a decision scenario, and contains $C_j$, the choice set provided in that decision, and $x_j \in C_j$, the item chosen from that choice set. We assume each datapoint is an independent sample from the joint distribution over choices sets and choices:
	\begin{align*}
	Pr[(x_j, C_j) = (x, C)] = w(C) \cdot P_{x,C}, \ \ \forall j.
	\end{align*}
	That is, a datapoint is a sample from some discrete distribution over $d$ possible (choice, choice set) pairs, where $d = \sum_{C \in \mathcal{C}} |C|$. We refer to this $d$-dimensional discrete distribution as a \textit{choice system} and denote it using $q$. 
	Let $\Pall$ denote the collection of all choice systems on the collection $\mathcal{C}$, i.e., the collection of all $d$-dimensional discrete distributions. The (choice, choice set) tuples $(x,C)$ can be used to intuitively index this distribution, so we will generally use $(x,C)$ in place of $1,\ldots,d$ when addressing a specific position in a specific choice system $q$.

	Our definition of a choice system differs from Falmagne's \cite{falmagne1978representation,barbera1986falmagne} related definition of a {\it system of choice probabilities} $\{P_{x,C}\}_{\forall C \subseteq \mathcal X, \forall x \in C}$. Our definition of a system explicitly includes the probability distribution over sets, $w(C)$, $\forall C \in \mathcal X$, which Falmagne's concept does not. Falmagne's definition can be thought of as studying choices separated from an exogenously defined collection of choice sets. Further, because our choice systems are built to span only observed $C \in \mathcal{C}$, under our definition all choice systems have a positive set probability $w(C)>0$ for all $C \in \mathcal{C}$. Without the assumption that $w(C)>0$ for all $C \in \mathcal{C}$, the worst case error of any test of IIA would be trivially large, since it has no chance of measuring a violation of IIA within a set $C$ that it can not observe. 
	
	\subsection{Choice systems and IIA}
	The Independence of Irrelevant Alternatives (IIA) assumption constrains the probabilities $P_{x,C}$ to satisfy the following ratio for any $C \subset \mathcal{X}$: 
	\begin{align*}
	\dfrac{P_{x,\lbrace x, y \rbrace \cup C}}{P_{y,\lbrace x, y \rbrace \cup C}} = \dfrac{P_{x,\lbrace x, y \rbrace}}{P_{y,\lbrace x, y \rbrace}}.
	\end{align*}
	As derived by Luce in \cite{luce1959individual}, the IIA assumption consequently admits what Luce called a ratio representation,
	\begin{align*}
	P_{x,C} = \frac{\gamma_x}{\sum_{z \in C} \gamma_z},
	\end{align*}
	for some $\gamma \in \mathbb{R}_+^n$. Clearly, $\gamma$ is scale invariant. Setting a scale such that $\sum_{z \in \mathcal{X}} \gamma_z = 1$ gives the interpretation that $\gamma_z = P_{z, \mathcal{X}}$.
	That is, this interpretation says that an IIA choice system can be described by just knowing $w(C)$, $\forall  C \in \mathcal{C}$, and $P_{x,\mathcal{X}}$, $\forall x \in \mathcal X$. 
	
	We let $\Piia \subset \Pall$ denote the collection of all choice systems that satisfy IIA. Essentially, then, where $\Pall$ is the space of all $d$-dimensional discrete distributions, $\Piia$ is a family of (non-linearly) constrained $d$-dimensional discrete distributions. We emphasize that $\Piia$ is non-convex, and that the convex hull of $\Piia$ is dense in $\Pall$; the latter fact we show in the appendix (Fact~\ref{cvx_hull_dense}).
	
	\subsection{Separation from IIA}
	
	Let $\Mdelta \subset \Pall$ denote the set of non-IIA choice systems that are $\delta$-separated, in total variation distance, from the set of IIA choice systems $\Piia$, for a given collection of choice sets $\mathcal C$:
	\begin{align}
	\Mdelta = \{q : q \in \Pall, \ \inf_{p \in \Piia} \tv{q-p} \geq \delta\}.
	\end{align}
	Intuitively, $\delta$ describes the size of the \textit{indifference zone} \cite{lehmann2005testing} between the null and the alternative hypothesis, beyond which false acceptance becomes a serious error and ought to be limited. Since IIA and ``not IIA'' denote two contiguous sets, the separation makes the division of parameters sharp, and helps define the following testing problem, the central problem of this work:
	\begin{align}
	\label{eq:iiatest}
	\begin{cases} 
	H_0: (x, C) \sim p^N & \text{for some unknown } p \in \Piia \\
	H_1: (x,C) \sim q^N & \text{for some unknown } q \in \Mdelta.
	\end{cases}
	\end{align}
	
	Given the hypotheses in ~\eqref{eq:iiatest}, we define a test $\phi$ as a map, $\phi : \{ (x_1,C_1),...,(x_N,C_N) \} \mapsto \{0,1\}$ from the dataset to a decision to reject the null $H_0$. Our choice of total variation to define the separation of the testing problem is motivated by the distance metric's natural interpretation as describing the maximal gap in probability between two distributions over all possible events. Moreover, the choice is aligned with many other recent analyses of testing for discrete distributions~\cite{paninski2008coincidence,wei2016sharp,valiant2017automatic} and thus enables a direct comparison of the difficulty of testing IIA with the difficulty of testing other properties such as identity or monotonicity. 
	
	Several other measures of distance for the separation, e.g., Hellinger distance or Kullback-Liebler divergence, may also be reasonable~\cite{acharya2015optimal,daskalakis2018distribution}. The Hellinger distance metric is closely related to total variation distance and has been shown in certain testing scenarios to produce rates identical to those resulting from total variation distance~\cite{daskalakis2018distribution}. We conjecture this is also the case for the IIA testing problem, and leave such an exploration for future work. The Kullback-Liebler divergence is a weaker measure of separation; although it has been shown to lead to trivial (infinite) minimax risk when used in general problems of identity testing in discrete distributions, this concern does not arise for the case for IIA testing\footnote{Whereas in identity testing, two distributions can be arbitrarily close while having infinite KL divergence, the KL divergence to the closest point within IIA is always upper bounded by a finite quantity strictly decreasing with total variation distance.}. The measure is, however, unusual and difficult to interpret. As stated before, consider the $d$-dimensional uniform distribution and distributions $\epsilon$ far away in total variation distance; the KL divergence is $\mathcal{O}(d)$ larger than the TV distance when the distributional difference lies within a constant number of entries (as opposed to being spread evenly across all of the entries). For these reasons, we focus on total variation distance in this work. 

	\subsection{Comparison incidence graphs}\label{subsection:comparison_graph}
	To represent the comparison structure imposed by $\mathcal{C}$, we consider an undirected bipartite graph $G_\mathcal{C}$ with $n$ nodes for each item in $\mathcal{X}$ and $m$ nodes for each set in $\mathcal{C}$. Edges in this graph are drawn from the item nodes to the set nodes to indicate membership. That is, the nodes corresponding to each $C \in \mathcal{C}$ have edges extending to the items contained in $C$. This bipartite graph can also be thought of as representing a hypergraph with $n$ nodes, one for each item in $\mathcal{X}$, and $m$ hyperedges, one for each set in $\mathcal{C}$. The graph $G_\mathcal{C}$ is then the incidence graph of this hypergraph, and contain $d$ edges. As a result we call $G_\mathcal{C}$ the {\it comparison incidence graph}, to avoid confusion with other objects called comparison graphs in the study of pairwise comparisons \cite{ford1957solution,negahban2016rank}. 
	
	Throughout this work, we will assume that $G_\mathcal{C}$ is connected, meaning that there is a path in the bipartite graph from every item $i$ to every item $j$.
	
	This assumption corresponds to the necessary and sufficient condition for inference of an IIA choice system first established for pairs by Ford~\cite{ford1957solution} and later more generally by Hunter~\cite{hunter2004mm}. Moreover, we note that the requirement that $m \geq n$, stated earlier, is a sufficient condition on $\mathcal{C}$ to guarantee that there are cycles in $G_\mathcal{C}$\footnote{The necessary and sufficient condition to guarentee cycles in a connected $G_\mathcal{C}$ is $d \geq m + n$. Though our analysis still applies in this regime, we prefer the stronger condition $m \geq n$ to produce shorter analyses and cleaner expressions at the cost of sharpness when $m < n$.}. As we shall see, cycles are critically important: a $G_\mathcal{C}$ without cycles corresponds to a collection $\mathcal{C}$ for which all choice systems are IIA, rendering a test for IIA meaningless. 
	
	We call a decomposition of $G_\mathcal{C}$ into a set of $s$ simple cycles a {\it cycle decomposition}, and denote a cycle decomposition by $\sigma$. We define two functions of $\sigma$ that appear in the lower bound: $\mu(\sigma) = \frac{d}{|\sigma|}$, the \textit{average cycle length} of cycles in the decomposition and $\alpha(\sigma) = \frac{1}{d}\sum_{\sigma_i \in \sigma} |\sigma_i|^2$, the \textit{cycle dispersion index}\footnote{Although the index of dispersion traditionally refers to a ratio of variance and mean, we abuse the term slightly here to refer to a ratio of the cycle lengths' second moment and the cycle lengths' mean.} of cycles in the decomposition. We will elaborate on these terms further in Section \ref{sec:bounding}. 
	
	For simplicity of presentation of the results and proofs, for the remainder of this work we only consider the special case of collections $\mathcal{C}$ for which $|C|$ is even, $\forall C \in \mathcal{C}$, and where every item appears an even number of times across the subsets in $\mathcal{C}$. Under this restriction, we note that every node in $G_\mathcal{C}$ (on both sides of the bipartite graph) has even degree. As a consequence, $G_\mathcal{C}$ is Eulerian, which will be critical for our analysis. 
	We note that a graph $G_\mathcal{C}$ with odd degrees can be made Eulerian by removing simple paths between odd-degree nodes. While we do not explicitly extend our lower bounds beyond $\mathcal C$ for which $G_\mathcal{C}$ is Eulerian, a modest exercise in bookkeeping relaxes the requirement of even set sizes and even item appearances. 
	
	\section{Lower bound on the minimax risk}
	\label{sec:lowerbounds}
	
	Given a dataset $\mathcal D_N = \lbrace (x_j, C_j) \rbrace_{j=1}^N$ of choices, consider the testing problem, as stated in \eqref{eq:iiatest}, of the IIA property against the alternative of an arbitrary choice system that is $\delta$-separated from IIA. We seek to lower bound the minimax risk of this testing problem. The maximum risk of any test $\phi$ is a useful quantity that demonstrates how poor the test could be at finding IIA violations without further prior knowledge about the structure of the violations. Because tests can provide high power over certain types of violations while providing no power over others, it provides a level standard of comparison across tests by studying each test's worst case scenario \cite{balakrishnan2018hypothesis}.
	
	A minimax lower bound then bounds the error of the best possible test in its worst case, from below. The bound is given as a function of the number of samples $N$, the separation $\delta$ between the null and the alternative, and problem parameters. It is a statement about the fundamental possibility (or limitation) of effective tests for the testing problem. A lower bound on the minimax risk can also be restated as a lower bound on the sample complexity---the number of samples required to achieve a probability of error for a given separation---and equivalently, as a lower bound on the testing radius---the separation required to perform a test with a certain probability of error given a specific number of samples. 
	
	\subsection{Minimax risk preliminaries}
	The minimax risk $R_{N,\delta}(\Piia)$ of the IIA testing problem, where risk is used in a uniform sense between the null and the alternative, is then
	\begin{align*}
	R_{N,\delta}(\Piia) = \inf_{\phi} \sup_{p \in \Piia, q \in \Mdelta} \frac{1}{2}p^N(\phi(\mathcal{D}_N) = 1) + \frac{1}{2}q^N(\phi(\mathcal{D}_N) = 0),
	\end{align*}
	where the $\inf$ is taken over any test $\phi$. The testing radius $\delta_N(\Piia)$ is then be defined as 
	\begin{align*}
	\delta_N(\Piia) \defeq \inf \lbrace \delta \mid R_{N,\delta}(\Piia) \leq a \rbrace,
	\end{align*}
	where $a$ is any constant less than $1/2$ (sometimes arbitrarily set to $a=1/3$). The testing radius describes the smallest size of indifference zone that allows hypotheses to be uniformly distinguishable with probability $1-a$. 
	Finally, the sample complexity $N_\delta(\Piia)$ is defined similarly as
	\begin{align*}
	N_\delta(\Piia) \defeq \inf \lbrace N \mid R_{N,\delta}(\Piia) \leq a \rbrace. 
	\end{align*}
	
	For both the sample complexity and testing radius quantities, we at times use $\GtrSim$, followed by a expression, to denote that the quantities scale at at least a constant times that expression. As an alternative objective to lower bounding the minimax risk $R_{N,\delta}(\Piia)$, we provide in the appendix (Section~\ref{app:alphatest}) a reformulation of risk bounding as a problem centered around level-$\alpha$ tests, with the corresponding parallel statement to the risk-centered Theorem~\ref{thm:lower_bound} given below. 
	
	\subsection{Statement of main result}
	
	Given the preliminaries involved in defining our test and the concepts of minimax risk introduced above, we can now state our main result. We briefly interpret the result, but the proof of the theorem (given in Section~\ref{sec:thmproof}) follows only after we go through two major steps: relaxing our test to a simpler test (Section~\ref{sec:relax}) and introducing lower bounds on properties of the relaxed test (Section~\ref{sec:bounding}).
	
	\begin{theorem}\label{thm:lower_bound}
		Up to some constant $c_1$ and properties $\mu(\sigma)$ and $\alpha(\sigma)$ of any cycle decomposition $\sigma$ of the comparison incidence graph $G_\mathcal{C}$, the minimax risk $R_{N,\delta}(\Piia)$ is lower bounded as 
		$$
		\frac{1}{2} - 
		\frac{1}{4} 
		\Big(\exp\Big(
		\frac{8\mu(\sigma)^4\alpha(\sigma)N^2 \delta^4}{d}\Big) -1 
		\Big)^{\frac{1}{2}} 
		\leq R_{N,\delta}(\Piia).
		$$ 
		The testing radius $\delta_N(\Piia)$ and sample complexity $N_\delta(\Piia)$ then scale as at least
		$$
		\delta_N(\Piia) \GtrSim \frac{d^{\frac{1}{4}}}{\mu(\sigma)\alpha(\sigma)^{\frac{1}{4}}\sqrt{N}}, \ \ \ \ 
		N_\delta(\Piia) \GtrSim \frac{\sqrt{d}}{\sqrt{\mu(\sigma)^4\alpha(\sigma)}\delta^2}.$$
		\end{theorem}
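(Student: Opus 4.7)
The plan is to combine the relaxation of Section~\ref{sec:relax} with a Le Cam--Ingster chi-squared bound in the style of Section~\ref{sec:bounding}. The signature $\tfrac12 - \tfrac14\sqrt{\exp(\cdot)-1}$ in the theorem is exactly what falls out of applying Le Cam's inequality $\tv{\bar Q^N - p^N} \le \tfrac12\sqrt{\chi^2(\bar Q^N, p^N)}$ to a Bayesian mixture $\bar Q$ over the alternative, and then tensorizing via the identity $\chi^2(\bar Q^N, p^N) + 1 = \mathbb E_{s,s'}\bigl[(1 + \langle q_s - p, q_{s'}-p\rangle_{1/p})^N\bigr]$. The whole game thus reduces to choosing $\bar Q$ and bounding this mixed moment.

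\textbf{Constructing the prior from cycle orientations.} I would fix a cycle decomposition $\sigma = \{\sigma_1, \ldots, \sigma_{|\sigma|}\}$ of $G_\mathcal{C}$, guaranteed by Lemma~\ref{lemma:bipartite_cycle_decomposition} since $G_\mathcal{C}$ is Eulerian, and take the uniform IIA system $p$ (uniform $w(C)$ and uniform $P_{x,C}$) as the null point. For each sign vector $s \in \{\pm 1\}^{|\sigma|}$ define $q_s = p + \epsilon v_s$, where $v_s$ routes $+\epsilon$ and $-\epsilon$ alternately around each cycle $\sigma_i$ in the Eulerian orientation chosen by $s_i$. Because each set $C$ receives balanced plusses and minuses from every cycle passing through it, the marginals $w(C)$ are preserved (so the MNL sufficient statistics match those under $p$), and $q_s \in \Pall$ for small $\epsilon$. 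The uniform Rademacher prior on $s$ defines the mixture $\bar Q = 2^{-|\sigma|}\sum_s q_s$.

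\textbf{Chi-squared bound and inversion.} Edge-disjointness of the cycles gives $\langle v_s, v_{s'}\rangle_{1/p} = d\sum_i s_i s'_i |\sigma_i|$ using $p \asymp 1/d$ on the uniform base. Independence of the Rademacher signs factors the mixed expectation across cycles, and applying $(1+x)^N \le e^{Nx}$ followed by $\cosh(z) \le e^{z^2/2}$ yields
\begin{equation*}
\chi^2(\bar Q^N, p^N) + 1 \;\le\; \prod_i \cosh\bigl(N\epsilon^2 d |\sigma_i|\bigr) \;\le\; \exp\!\Bigl(\tfrac12 N^2 \epsilon^4 d^2 \sum_i |\sigma_i|^2\Bigr) = \exp\!\bigl(\tfrac12 N^2 \epsilon^4 d^3 \alpha(\sigma)\bigr).
\end{equation*}
The relaxation of Section~\ref{sec:relax} calibrates $\delta$ to $\epsilon$ by showing that a coherent $\epsilon$-rotation of a single cycle contributes $\Theta(\epsilon)$ to the TV distance from every IIA system, so $\delta \asymp \epsilon |\sigma| = \epsilon d/\mu(\sigma)$. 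Substituting $\epsilon^4 \asymp \delta^4 \mu(\sigma)^4/d^4$ matches the stated exponent $8\mu(\sigma)^4\alpha(\sigma)N^2\delta^4/d$ up to the absorbed constant $c_1$. Setting the risk lower bound to any constant less than $1/2$ (say $1/3$) and solving for $\delta$ or $N$ gives the claimed scalings for $\delta_N(\Piia)$ and $N_\delta(\Piia)$.

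\textbf{Main obstacle.} The Ingster calculation is essentially turn-the-crank once the prior is in place; the genuinely hard part is the relaxation. Since $\Piia$ is nonconvex and its convex hull is dense in $\Pall$ (Fact~\ref{cvx_hull_dense}), a naive triangle-inequality lower bound from $p$ alone is useless --- one must argue that no IIA choice system can absorb a cycle-based perturbation. The key structural fact is Luce's ratio representation: traversing any closed cycle in $G_\mathcal{C}$ forces the product of the induced $P_{x,C}/P_{y,C}$ ratios to equal one under IIA, so any coherent $\epsilon$-rotation around a cycle introduces an $\Omega(\epsilon)$ obstruction no IIA system can undo on that cycle. Upgrading this pointwise obstruction into a uniform TV lower bound against all of $\Piia$ is where the $\mu(\sigma)$-dependence enters, while the variance calculation in the chi-squared step supplies the $\alpha(\sigma)$-dependence; the interplay between these two structural features of the decomposition is the real content of the theorem.
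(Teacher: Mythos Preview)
Your proposal is correct and follows essentially the same route as the paper: a Le Cam reduction to the uniform choice system versus a Rademacher mixture of cycle-oriented perturbations, an Ingster $\chi^2$ bound that factors over cycles and is closed with $\cosh(z)\le e^{z^2/2}$ (Lemmas~\ref{lemma:bound1}--\ref{lemma:bound2}), and the separation step (Lemma~\ref{lemma:seperation}) converting $\epsilon$ to $\delta$ via the cycle obstruction, which is where the factor $\mu(\sigma)$ enters. The only differences are cosmetic: your $\epsilon$ is scaled per entry rather than the paper's $\epsilon/d$ per entry (hence the $d^3$ in your exponent before substitution, which cancels correctly), and note that the null $p_0$ has $w(C)=|C|/d$ rather than a literally uniform $w(C)$, though it is uniform as a $d$-dimensional distribution so your $p\asymp 1/d$ calculation is right.
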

	
	Examining the lower bound, it is clear that if the quantity $\frac{8\mu(\sigma)^4\alpha(\sigma)N^2 \delta^4}{d}$ is small, the minimax risk is bounded away from 0, and no uniformly consistent test exists. Keeping the quantity small then immediately determines a lower bound on the scaling, up to constants, of the testing radius and sample complexity. 
	
	The dependence of the lower bound on the comparison incidence graph properties $\mu(\sigma)$ and $\alpha(\sigma)$ is a feature unique to the IIA testing problem not found in previous general testing problems with discrete distributions. As we will illustrate more clearly in Section~\ref{sec:specifics}, we find that for well-behaved, uniform, and dense comparison incidence graphs (e.g., for the collection of all pairwise comparisons), both $\mu(\sigma)$ and $\alpha(\sigma)$ can be set to constants independent of other problem parameters (and then incorporated into a constant $c$). 
	
	We will further show, through an extremal analysis, that $\mu(\sigma)$ and $\alpha(\sigma)$ are at most $2n$ for any comparison incidence graph, and approximately $c \log(n)$ for sufficiently dense comparison incidence graphs, where $n$ is the number of alternatives and $c$ a constant. This analysis yields a global lower bound on the minimax risk, one we state as a corollary to the main theorem in Section \ref{sec:thmproof}, that replaces $\mu(\sigma)$ and $\alpha(\sigma)$ in the risk lower bounds with expressions only in terms of $n$ and $d$.
	
	Although our main lower bound broadly addresses the question of testing IIA for a given collection $\mathcal{C}$, IIA is often defined in the discrete choice literature as a single property of the complete choice system of $n$ items. By this definition, any true test of IIA must encompass all subsets of an item universe, a setting where $d = n 2^{n-1}$. For this case, we obtain the following bound, derived in Section~\ref{sec:specifics}:
	$$
	R_{N,\delta}(\Piia) \geq \frac{1}{2} - \frac{1}{4}\Big(\exp\Big(\frac{c \log(n)^5N^2 \delta^4}{n2^{n-1}} \Big) - 1\Big)^{\frac{1}{2}},
	$$
	with associated testing radius $\delta_N(\Piia)$ and sample complexity $N_\delta(\Piia)$ that scale as at least
	$$
	\delta_N(\Piia) \GtrSim
	\frac{n^{\frac{1}{4}}2^{\frac{n}{4}}}{\log(n)^{\frac{5}{4}}\sqrt{N}}, 
	\ \ \ \ 
	N_\delta(\Piia) \GtrSim 
	\frac{\sqrt{n}2^{\frac{n}{2}}}{\log(n)^{\frac{5}{2}}\delta^2}.$$
	The sample complexity demonstrates that testing IIA, when treated as a general property of a choice system, requires at least samples \textit{exponential} in the number of items $n$. We can thus conclude that testing IIA, when treated as a property of a complete choice system, is hopelessly intractable for large collections.
	
	As stated earlier, the flexibility of our main lower bound allows for more refined analyses for specific collections $\mathcal{C}$, and yields room for optimism (that there may exist reasonably efficient tests) in a variety of special cases. Table ~\ref{table:all_lower_bounds} shows the specific lower bound for the ``all subsets'' setting, in addition to highlighting these special cases where alternative arguments about $\mu(\sigma)^4\alpha(\sigma)$ can be applied to the general bound. Section~\ref{sec:specifics} gives a detailed derivation and discussion of these settings. 
	
	\begin{table}[]
		\caption{
			Lower bounds on the minimax risk $R_{N,\delta}$ of the IIA testing problem, as defined in \eqref{eq:iiatest}, and lower bounds on the scaling rates of the associated testing radius $\delta_N$ and sample complexity $N_\delta$. For a given collection $\mathcal C$, $n$ is the number of items in the choice problem, $N$ is the number of samples, $d = \sum_{C \in \mathcal{C}} |C|$, and $\sigma$ is any cycle decomposition of the comparison incidence graph $G_{\mathcal C}$. Different assumptions about the structure of $\mathcal C$ invite different guaranteed properties of some cycle decomposition $\sigma$; thus the different bounds reflect differences in $d$ but also differences in guarantees about $\mu(\sigma)$ and $\alpha(\sigma)$ for some $\sigma$. \\ 
		}
		\small
		\begin{tabular}{|l|c|c|c|}
			\hline
			Structure of $\mathcal{C}$
			& 
			$R_{N,\delta}(\Piia)$
			& 
			$\delta_N(\Piia)$
			& 
			$N_\delta(\Piia)$ 
			\\ \hline
			
			General
			&
			$\geq \frac{1}{2} - 
			\frac{1}{4} 
			\Big(\exp\Big(
			\frac{8\mu(\sigma)^4\alpha(\sigma)N^2 \delta^4}{d}\Big) -1 
			\Big)^{\frac{1}{2}}$
			& 
			$\GtrSim \frac{d^{\frac{1}{4}}}{\mu(\sigma)\alpha(\sigma)^{\frac{1}{4}}\sqrt{N}}$
			& 
			$\GtrSim \frac{\sqrt{d}}{\sqrt{\mu(\sigma)^4\alpha(\sigma)}\delta^2}$
			
			\\ \hline
			All subsets, $d=n2^{n-1}$
			&
			$\geq \frac{1}{2} - \frac{1}{4}\Big(\exp\Big(\frac{c \log(n)^5N^2 \delta^4}{n2^{n-1}} \Big) - 1\Big)^{\frac{1}{2}}$
			&
			$\GtrSim \frac{n^{\frac{1}{4}}2^{\frac{n}{4}}}{\log(n)^{\frac{5}{4}}\sqrt{N}}$
			& 
			$\GtrSim \frac{\sqrt{n2^{n}}}{\log(n)^{\frac{5}{2}}\delta^2}$
			\\ \hline 
			All pairs, $d=n(n-1)$
			& 
			$\geq \frac{1}{2} - \frac{1}{4}\Big(\exp\Big(\frac{c N^2 \delta^4}{n(n-1)} \Big) - 1\Big)^{\frac{1}{2}}$
			&
			$\GtrSim \frac{\sqrt{n}}{\sqrt{N}}$
			&
			$\GtrSim \frac{n}{\delta^2}$
			\\ \hline 
			Single big cycle, $d=2n$
			&
			$\geq \frac{1}{2} - \frac{1}{4}\Big(\exp\Big(c n^4N^2 \delta^4 \Big) - 1\Big)^{\frac{1}{2}}$
			&
			$\GtrSim \frac{1}{n\sqrt{N}}$
			&
			$\GtrSim \frac{1}{n^2\delta^2}$
			\\ \hline
		\end{tabular}
		\label{table:all_lower_bounds}
	\end{table}
	
	\section{Testing relaxation}
	\label{sec:relax}
	Rather than directly considering the IIA test stated in \eqref{eq:iiatest}, we will now take two concrete steps to introduce an analytically simpler test that is not statistically harder than the IIA test. By lower bounding the minimax risk of the simpler test, we obtain a lower bound on the harder IIA test. Two relaxation steps now follow, the first of which restricts $H_0$ to a single element of $\Piia$, the uniform distribution $p_0$, and the second of which restricts $H_1$ from $\Mdelta$ to a carefully constructed assortment of perturbations of $p_0$ that we can show all reside in $\Mdelta$.
	
	For the first simple step, consider the testing problem:
	\begin{align}
	\label{eq:test2}
	\begin{cases} 
	H_0: (x, C) \sim p^N & \text{for } p = p_0 \\
	H_1: (x,C) \sim q^N & \text{for some unknown } q \in \Mdelta,
	\end{cases}
	\end{align}
	where $p_0$ is the uniform distribution, $p_{0,(x,C)} = 1/d, \forall (x,C)$ (recalling that each discete distribution is $d$-dimensional, where $d$ is the sum of the cardinalities of the choice sets in a given $\mathcal C$). Since $p_0 \in \mathcal{P}_0$, the problem is simpler than the original problem, and any lower bound on the performance of this problem carries over to the original problem of interest. 
	
	Now consider a discrete distribution $\qbe$ that is a perturbation of the uniform distribution of the form:
	\begin{align*}
	\qbe((x,C)) = \frac{1}{d} + \frac{\epsilon b_{(x,C)}}{d}, 
	\end{align*}
	\begin{align*}
	\epsilon \in [0,1] \ 
	b \in \lbrace -1, 1 \rbrace^d,
	\ 
	\sum_{y \in C} b_{y,C} = 0, \forall C \in \mathcal C,
	\text{ and } 
	\sum_{\substack{C \in \mathcal{C}\\ C \ni x}} b_{x,C} = 0, \forall x.
	\end{align*}

	That is, $b$ perturbs the uniform distribution such that there is no net translation over any set, or over all of the appearances of an item over sets. Our goal is to construct perturbations $\qbe$, selecting $\epsilon$ and $b$ that land in $\Mdelta$, meaning they are at least $\delta$ away (in TV distance) from $\Piia$ but at the same time still correspond to valid probability distributions (i.e., within the $d$-simplex). We are starting our journey (in the direction of $b$) at the uniform distribution $p_0$, and we want to show that we always land at least $\delta$ away from {\it any} distribution in $\Piia$.
	
	Let $\Bset$ be any collection of $M=|\Bset|$ vectors $b$ satisfying the above conditions for the given collection $\mathcal C$. That is, 
	\begin{align}
	\label{eq:bset_full}
	\Bset \subseteq 
	\left \{
	b \in \lbrace -1, 1 \rbrace^d
	:
	\sum_{y \in C} b_{y,C} = 0, \forall C \in \mathcal C,
	\ \ 
	\sum_{\substack{C \in \mathcal{C}\\ C \ni x}} b_{x,C} = 0, \forall x
	\right \}.
	\end{align}
	We define $\bar{q}_\epsilon = \frac{1}{M} \sum_{b \in \Bset} \qbe$ as the mixture distribution over the $M$ distributions in $\Bset$. Samples from $\bar{q}_\epsilon$ would be drawn marginally, that is, some $\qbe$ would be chosen uniformly from the $M$ possible distributions, and samples would then come from that $\qbe$. Consider then the testing problem:
	\begin{align}
	\label{eq:test3}
	\begin{cases} 
	H_0: (x, C) \sim \mathbb{P}_0 = 
	{p_0}^N \\
	H_1: (x,C) \sim \mathbb{P}_1 = 
	{\bar{q}_{\epsilon} }^N.
	\end{cases}
	\end{align}
	The difference between the original test in \eqref{eq:iiatest} and the test in \eqref{eq:test3} is illustrated in Figure~\ref{fig:piia}. 
	
	\begin{figure}[t]
		\centering
		\includegraphics[height=35mm]{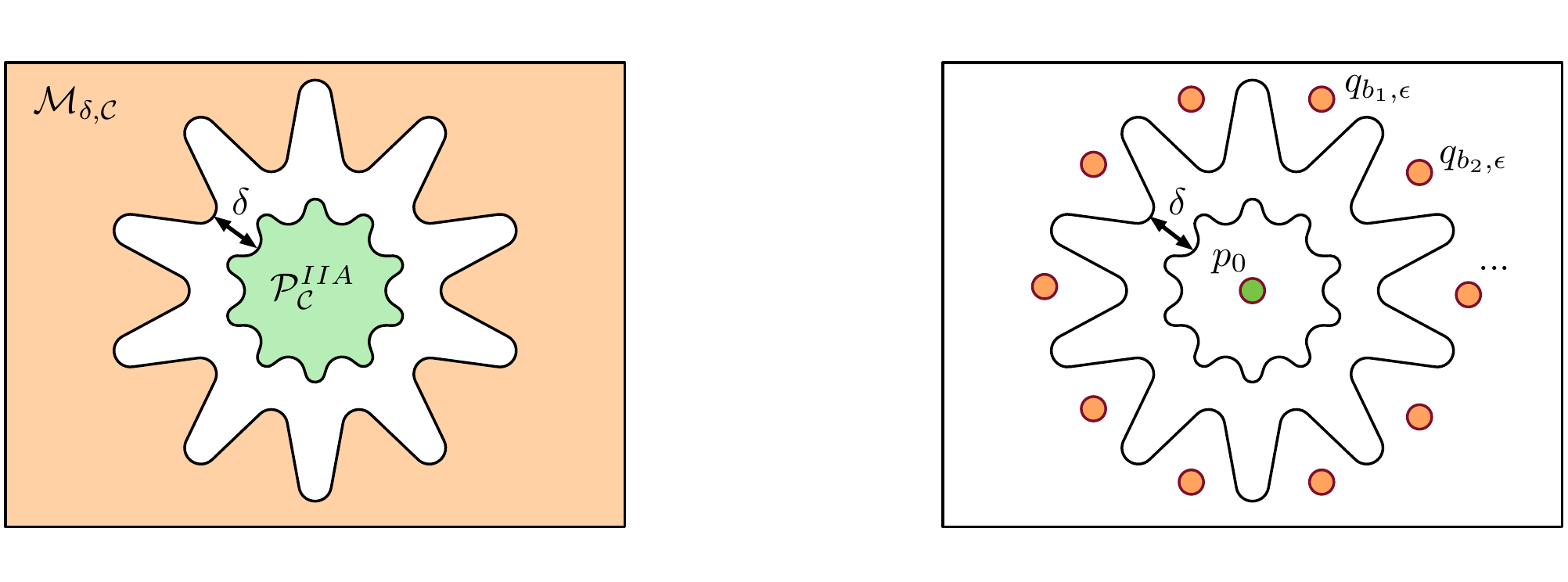}
		\caption{
			A schematic illustration of the test simplification procedure employed to lower bound the minimax risk. The illustration on the left corresponds to the IIA test in \eqref{eq:iiatest}, while the illustration on the right corresponds to the easier test in \eqref{eq:test3}, replacing $\Piia$ with $p_0$, the uniform distribution, and $\Mdelta$ with the collection of perturbations $\{\qbe\}_{b \in \mathcal{B}_\mathcal{C}}$.
			}
		\label{fig:piia}
		\vspace{-4mm}
	\end{figure}
	
	We now wish to identify a (large) set of $\qbe$ where we can lower bound the distance from each $\qbe$ back to $\Piia$. Our strategy will involve considering $b$ where we can partition the indices into subsets and show that each subset has a lower bound on its contribution to the distance back to $\Piia$. We then show that every partition will contribute at least a small distance to the total TV distance from IIA. 
	
	\xhdr{Perturbations and the comparison incidence graph}  
	Consider the undirected bipartite graph $G_\mathcal{C}$ defined in Section~\ref{subsection:comparison_graph}, and recall that by assumption every item appears an even number of times across sets in $\mathcal{C}$, and every set is of even size. We then have that every node in $G_\mathcal{C}$ is of even degree---hence, the graph is Eulerian. 
	
	Our key step in constructing favorable perturbations $b$ will be to treat the values in the vector $b$ as directing the edges in the undirected graph $G_\mathcal{C}$. Specifically, if $b_{(x,C)} = 1$, the edge is directed to be from the item-node for item $x$, to the set-node for set $C$, and if $b_{(x,C)} = -1$ the edge has the opposite direction. We now restate the sum constraints on a $b$ vector:
	\begin{align*}
	\sum_{\substack{C \in \mathcal{C}\\ C \ni x}} b_{x,C} = 0 \ \ \forall x \in \mathcal{X}, && \sum_{y \in C} b_{y,C} = 0 \ \ \forall C \in \mathcal{C}.
	\end{align*}
	These constraints can be interpreted as requiring the \textit{in-degree} of each node in $G_\mathcal{C}$ to equal its \textit{out-degree}. We then observe that finding vectors $b$ that satisfy these constraints is equivalent to finding {\it Eulerian orientations} of the graph $G_\mathcal{C}$. Thus, $\Bset$ is simply any collection of $M$ vectors that each correspond to a unique Eulerian orientation of $G_{\mathcal C}$. While there can be many Eulerian orientations of $G_\mathcal{C}$, our hope is to work with a specially structured collection $\Bset$ such that the $\qbe$ can be shown to land in $\Mdelta$ for all $b \in \Bset$.
	
	\xhdr{Cycle decompositions and orientations of Eulerian bipartite graphs} To find a structured collection of Eulerian orientations of $G_\mathcal{C}$, we begin by examining the cycle decompositions of $G_\mathcal{C}$.
	We first raise the simple fact that every Eulerian graph can be decomposed into edge-disjoint simple undirected cycles. We remind the reader that we call a decomposition of $G_\mathcal{C}$ into a set of $s$ simple cycles a {\it cycle decomposition}. There are ostensibly many different ways to select edge-disjoint cycles of $G_\mathcal{C}$, and we denote the collection of all possible edge-disjoint cycle decompositions of $G_\mathcal{C}$ by $\Sigma_{\mathcal{C}}$.
	
	\begin{figure}[t]
		\centering
		\includegraphics[height=55mm]{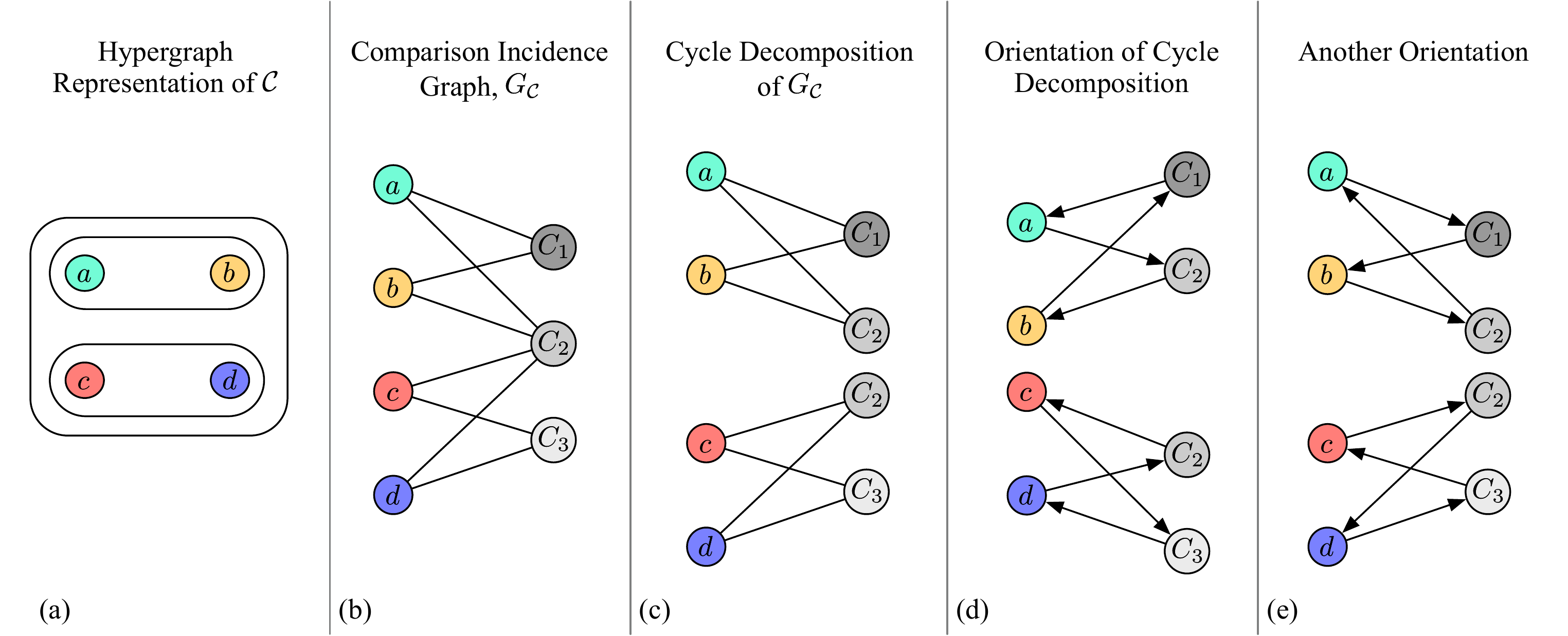}
		\caption{
			A diagram illustrating the steps undertaken to construct Eulerian orientations for a simple example collection $\mathcal C$. (a) The hypergraph of the collection under study, $\mathcal{C} = \{\{a,b\},\{c,d\},\{a,b,c,d\}\}$. (b) The comparison incidence graph $G_{\mathcal C}$ corresponding to $\mathcal C$. (c) One possible cycle decomposition of $G_\mathcal{C}$, consisting of two undirected cycles. (d,e) Two different Eulerian orientations of the given cycle decomposition. In the first orientation, (d), both cycles are directed counterclockwise. In the second, (e), both cycles are directed clockwise. Mixtures of these two rotations produce two other orientations of the given cycle decomposition, not shown, yielding a total of four orientations. 
		}
		\label{fig:gc}
		\vspace{-4mm}
	\end{figure}
	A cycle decomposition $\sigma \in \Sigma_\mathcal{C}$ is a set of cycles forming a feasible edge-disjoint cycle decomposition of $G_\mathcal{C}$, and an element $\sigma_i \in \sigma$ denotes the $i$th cycle in $\sigma$. We use $|\sigma|$ to denote the number of cycles in the decomposition $\sigma$, and $|\sigma_i|$ to denote the length of each cycle. Since all cycles of bipartite graphs have even length, we will write $|\sigma_i|=2k_i$, where $k_i$ is the number of item-nodes in the cycle. 

	Lastly, observe that we can always find a cycle decomposition of $G_\mathcal{C}$ such that every cycle has size at most $2n$. This observation follows from a simple application of the pigeonhole principle: since $G_\mathcal{C}$ is bipartite, with $n$ item-nodes and $m$ set-nodes, any cycle of length greater than $2n$ must visit some item-node more than once; hence the cycle is not simple, and can be decomposed into two smaller edge-disjoint cycles. 
	
	\xhdr{From cycle decomposition to perturbations}
	We generate a structured collection of perturbation vectors $b$ by generating many distinct Eulerian orientations of a given cycle decomposition $\sigma$. We can independently direct the edges in each of the	cycles in $\sigma$ in one of 2 directions (clockwise or counterclockwise), and all of these collections of directions correspond to valid $b$ vectors. Thus, we have found $M=2^{|\sigma|}$ different Eulerian orientation for $G_\mathcal{C}$, each with a one-to-one correspondence to a valid perturbation vector $b$.
	
	We define $\Bset(\sigma)$ as the {\it specific} collection of perturbation vectors $b$ that orient a cycle decomposition $\sigma$, refining our generic definition of such collections of perturbations $\Bset$ in Equation~\eqref{eq:bset_full}:
	\begin{align}
	\label{eq:bset}
	\Bset(\sigma) = 
	\left \{
	b \in \lbrace -1, 1 \rbrace^d
	:
	\sum_{y \in C} b_{y,C} = 0, \forall C, 
	\ \ 
	\sum_{\substack{C \in \mathcal{C}\\ C \ni x}} b_{x,C} = 0, \forall x,
	\ \ 
	|b_\ell - b_{\ell+1} | =2, \forall \ell \in \sigma_i,
	\forall \sigma_i \in \sigma
	\right \}.
	\end{align}
	Here $(x,C)$ indexes the elements of the vector $b$ and we use $\ell$ to index sequential edges in a cycle $\sigma_i$. For each perturbation vector $b \in \Bset(\sigma)$, we can construct a discrete distribution $\qbe$ as described previously. We additionally define $\mu(\sigma) = \frac{d}{|\sigma|}$, the average cycle length.
	
	\xhdr{Bounding separation} 
	For any cycle decomposition $\sigma$ and its corresponding collection of vectors $b \in \Bset(\sigma)$, with corresponding distributions $\qbe$, we obtain the following result.
	\begin{lemma}\label{lemma:seperation}
		A test between $p_0^N$ and ${\bar{q}_{\epsilon} }^N$, for any $N$ and $\epsilon \geq 2 \mu(\sigma) \delta$ is not statistically harder than the original IIA test. In particular, $ \epsilon \ge 4n \delta \geq 2\mu(\sigma)\delta$, suffices for all $\sigma$.
	\end{lemma}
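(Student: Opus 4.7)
The plan is to verify the two containments that reduce the original IIA test to the relaxed test in~\eqref{eq:test3}: first, that $p_0 \in \Piia$, and second, that every $\qbe$ with $b \in \Bset(\sigma)$ lies in $\Mdelta$ whenever $\epsilon \geq 2\mu(\sigma)\delta$. The first is immediate: the uniform distribution satisfies $p_{0,(x,C)} = 1/d$, which has the IIA factorization $u(C)\gamma_x$ with $\gamma_x \equiv 1/n$ and $u(C) \equiv n/d$.

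The substantive step is the second containment, i.e., the lower bound $\inf_{p \in \Piia}\tv{\qbe - p} \geq \delta$. My approach exploits the Eulerian orientation structure of $b$. By construction of $\Bset(\sigma)$, traversing any cycle $\sigma_i$ of the decomposition as $x_1 - C_1 - x_2 - \cdots - x_{k_i} - C_{k_i} - x_1$, the values of $b$ alternate in sign, so $\qbe$ oscillates between $(1+\epsilon)/d$ and $(1-\epsilon)/d$. Any IIA $p = u(C)\gamma_x$, however, has $C$-independent ratios $p_{(x,C)}/p_{(y,C)} = \gamma_x/\gamma_y$, which forces the telescoping product $\prod_j \gamma_{x_j}/\gamma_{x_{j+1}} = 1$; but the corresponding product for $\qbe$ equals $((1+\epsilon)/(1-\epsilon))^{k_i} > 1$. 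This cycle-level impossibility is the obstruction driving the $\delta$-separation.

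To quantify the obstruction, I plan to use the variational representation $\tv{q - p} \geq \tfrac{1}{2}|\langle f, q-p\rangle|$ for $\|f\|_\infty \leq 1$, taking a witness $f_\ell = c_i b_\ell$ on each cycle $\sigma_i$ with adversarially chosen signs $c_i \in \{-1,+1\}$. A direct computation yields $\langle b|_{\sigma_i}, \qbe\rangle = |\sigma_i|\epsilon/d$, using $b_\ell^2 = 1$ and $\sum_{\ell \in \sigma_i} b_\ell = 0$. For IIA $p$, expanding the inner product via the factorization $p_{(x,C)} = u(C)\gamma_x$ and invoking the fact that each set-node and item-node on $\sigma_i$ contributes two cycle edges of opposite $b$-sign, I would show that $\langle b|_{\sigma_i}, p\rangle$ is second-order in the IIA-parameter deviations from $p_0$: the leading tangent-space contributions $\{v(C) + w(x)\}$ are annihilated exactly by the cycle-wise sum against $b$. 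Summing the per-cycle bounds over the $|\sigma|$ cycles and using $\mu(\sigma) = d/|\sigma|$ yields $\inf_{p \in \Piia}\tv{\qbe - p} \geq \epsilon/(2\mu(\sigma))$, so setting $\epsilon \geq 2\mu(\sigma)\delta$ gives the required $\delta$-separation. The bound $\epsilon \geq 4n\delta$ then follows by combining with the pigeonhole observation $\mu(\sigma) \leq 2n$ established in Section~\ref{subsection:comparison_graph}.

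The main obstacle is controlling $\langle b|_{\sigma_i}, p\rangle$ uniformly across all $p \in \Piia$, not only those near $p_0$. I would handle this in two regimes: for $p$ close to $p_0$ the tangent-space analysis above applies cleanly, since IIA's tangent cone at $p_0$ consists exactly of row-plus-column perturbations $v(C) + w(x)$ that are killed by the cycle-wise sum against $b$; for $p$ far from $p_0$ a direct reverse-triangle argument using $\tv{\qbe - p_0} = \epsilon/2$ gives $\tv{\qbe - p} \geq \tv{p - p_0} - \epsilon/2$, which already exceeds $\delta$ once $p$ is sufficiently distant from $p_0$.
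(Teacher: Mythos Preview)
Your telescoping-product observation is exactly the right obstruction and is essentially the paper's insight in multiplicative form: along each cycle the IIA ratios telescope to $1$, whereas $\qbe$'s ratios do not. Where your proposal diverges from the paper is in how you \emph{quantify} this obstruction, and that is where the gap lies.

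The paper does not use a dual witness. It works directly with the relaxed primal objective
\[
\inf_{u,\gamma \ge 0}\ \tfrac{1}{2}\sum_{(j,C)}\Bigl|u_C\gamma_j - \tfrac{1+\epsilon b_{(j,C)}}{d}\Bigr|,
\]
and argues, by walking around a single cycle, that the sign pattern of $b$ cannot be matched by any rank-one array $u_C\gamma_j$: trying to satisfy each sign in turn forces a strictly monotone sequence of perturbations that contradicts cyclicity. Hence at least one summand on every cycle is at least $\epsilon/d$, giving $\epsilon/(2d)$ per cycle; summing over $|\sigma|$ cycles yields $\epsilon|\sigma|/(2d)=\epsilon/(2\mu(\sigma))$. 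This is a pointwise, sign-based argument and needs no near/far split.

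Your route instead lower-bounds $\tv{\qbe-p}$ by $\tfrac{1}{2}\sum_i\bigl||\sigma_i|\epsilon/d-\langle b|_{\sigma_i},p\rangle\bigr|$ and then appeals to ``$\langle b|_{\sigma_i},p\rangle$ is second order.'' The first-order annihilation you describe is correct at $p_0$, but you never bound the remainder uniformly over the region where you need it. Concretely: the reverse-triangle step handles $\tv{p-p_0}\ge \epsilon/2+\delta$, and the forward triangle handles $\tv{p-p_0}\le \epsilon/2-\delta$; the band $\epsilon/2-\delta<\tv{p-p_0}<\epsilon/2+\delta$ is where the witness must do real work, and there a TV-ball around $p_0$ gives no pointwise control on the IIA parameters. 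Without such control, $\langle b|_{\sigma_i},p\rangle=\sum_j u_{C_j}(\gamma_{x_j}-\gamma_{x_{j+1}})$ can be comparable to $|\sigma_i|\epsilon/d$ even for $p$ close to $p_0$ in TV (take $u$ and $\gamma$ deviating on the order of $\sqrt{\epsilon}$ on a few coordinates), and your witness lower bound collapses on exactly the cycles where it matters. Also note that if the second-order terms really did vanish, your witness would give $\epsilon/2$, not the $\epsilon/(2\mu(\sigma))$ you state; so the ``per-cycle summation'' you invoke does not match the linear-witness mechanism you set up.

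In short: your obstruction is right, but the dual quantification is incomplete. The fix is the paper's primal sign argument on the relaxed objective, which converts the telescoping impossibility directly into a guaranteed $\epsilon/d$ contribution from some term on every cycle, with no regime-splitting required.
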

	\begin{proof}
		If we can show that every $\qbe \in \Mdelta$, certainly then the mixture distribution\footnote{We remind the reader that we use mixture distribution to mean a distribution from which samples are drawn marginally.} ${\bar{q}_{\epsilon} } \in \Mdelta$, and hence the new test in \eqref{eq:test3} is easier than the original test in \eqref{eq:iiatest}. From the definitions of choice systems and of TV distance, we see that for any $\qbe$,
		\begin{align*}
		\inf_{p \in \Piia} \tv{\qbe - p} = \inf_{\gamma \in \Delta_n, w \in \Delta_m} \frac{1}{2} \sum_{C \in \mathcal{C}} \sum_{j \in C} \Big|w_C\frac{\gamma_j}{\sum_{k\in C} \gamma_k} - \frac{1+\epsilon b_{(j,C)}}{d}\Big|,  
		\end{align*}
		where $n$ is the number of items and $m=|\mathcal{C}|$ is the number of unique sets.
		
		While it is easy to show that $\tv{\qbe - p_0} = \frac{\epsilon}{2}$, that is, all the $\qbe$ are exactly $\frac{\epsilon}{2}$ away from the point they were perturbed from, the non-convexity of $\Piia$ provides little intuition about the distance between $q_b$ and the closest other element in $\Piia$. See Figure \ref{fig:piia} for an illustration of the complications associated with distance from a non-convex set. 
		
		We do not exactly solve the problem of finding the minimum distance. Rather, we take advantage of the specific structure of $b$ to provide a lower bound on the problem's optimal value, the closest that any choice system satisfying IIA could be to $\qbe$. We proceed first through a relaxation:
		\begin{align}\label{relaxed_obj}
		\inf_{p \in \Piia} \tv{\qbe - p} \geq \inf_{\gamma \in \mathbb R^n_+, w \in \mathbb R^m_+} \frac{1}{2} \sum_{C \in \mathcal{C}} \sum_{j \in C} \Big|w_C \gamma_j - \frac{1+\epsilon b_{(j,C)}}{d}\Big|.
		\end{align}
		Here, two things happened: first, the lower bound follows from removing the unit simplex constraints, and second, since the denominator gamma terms are non-negative and only vary with every $C$, they maybe be absorbed into a unconstrained non-negative $w_C$ without changing the value of the optimal solution. 
		
		Next, before we consider the objective over an arbitrary $\mathcal{C}$, we first consider a very structured collection, a series of pair comparisons that form a ``cycle'' among unique items: e.g. $\mathcal{C}_\text{cycle} = \{i,j\}, \{j,k\}, \{k,l\}...\{z,i\}$. Say $\mathcal{C}_\text{cycle}$ has $k$ unique items. The comparison incidence graph $G_\text{cycle}$ corresponding to $\mathcal{C}_\text{cycle}$ is then bipartite with $k$ item-nodes, $k$ set-nodes, and $2k$ edges. The cycle decomposition of $G_\text{cycle}$ contains just one cycle, and the perturbations $b$ we consider for this graph are one of the two possible orientations of this single cycle. Without loss of generality, consider the ``clockwise direction''. 
		
		We again will not try to solve the problem in~\eqref{relaxed_obj} exactly; instead, we will show that it must fail to achieve zero error on at least one element of the sum, by virtue of the fact that the terms $w_C \gamma_j$ can not all simultaneously deviate in the same direction from the uniform point $\frac{1}{d}$ as $\qbe$ does. 
		Thus we want to show that it is not possible for {\it every} $w_C \gamma_j$ product to achieve a value above $\frac{1}{d}$ when $b_{(j,C)} = 1$ and a value below $\frac{1}{d}$ when $b_{(j,C)}=-1$. 
		
		We demonstrate this impossibility as follows: we follow the cycle, and show that while we can zero out most of the contributions to the loss, there is a waterbed effect that prevents us from zeroing out all the values. Begin, without loss of generality, at item $i$ in the first set (label it $C_1$), with all parameters starting at the middle $\frac{1}{\sqrt{d}}$ point. In order for $w_{C_1}\gamma_i$ to be higher than $\frac{1}{d}$, we must raise, without loss of generality, $w_{C_1}$ by some value $\delta_A > 0$. Then, in order for $w_{C_1}\gamma_j$ to be below $\frac{1}{d}$, we must lower $\gamma_j$ by some $\delta_B > \delta_A$. Then, for $w_{C_2}\gamma_j$ to be above $\frac{1}{d}$, we must raise $w_{C_2}$ by some $\delta_C > \delta_B$. We proceed this way, alternating between $w$ and $\gamma$, with each increase and decrease larger in magnitude than the previous to meet the sign requirements, until we arrive at the last set. Following the pattern, we have raised $w_{C_k}$ by some $\delta_Y > \delta_X > ... > \delta_A$ in order for $w_{C_k}\gamma_z$ to be below $\frac{1}{d}$, we need to raise $\gamma_1$ by $\delta_Z > \delta_Y > ... \delta_A$ in order for $w_{C_k}\gamma_i$ to be above $\frac{1}{d}$. But, since $\delta_Z > \delta_A$, we no longer obey the sign constraint for $w_{C_1}\gamma_i$. 
		
		The symmetry of the cycle generalizes the result of this demonstration: starting at any arbitrary node for any initial values, for either cycle orientation, attempting to satisfy every sign constraint is always met with failure. Thus, we conclude that not all signs can be satisfied by any setting of $w$ and $\gamma$. A straightforward consequence of not being able to satisfy all the signs is that at least one pair of values is at least $\frac{\epsilon}{d}$ away from its optimal value by not being able to achieve the sign of that pair. Thus, we have
		\begin{align*}
		\inf_{\gamma, w} \frac{1}{2} \sum_{C \in \mathcal{C}_\text{cycle}} \sum_{j \in C} \Big|w_C \gamma_j - \frac{1+\epsilon b_{(j,C)}}{d}\Big| \geq \frac{1}{2} \frac{\epsilon}{d}.
		\end{align*}

		After a lengthy exposition, we have only provided a crude sign-based lower bound for the special case of a simple cycle, still leaving open the question of a lower bound for general $\mathcal{C}$. The key next step is in accounting for the design of the $b$ perturbations. Each perturbation $b \in \Bset(\sigma)$, associated with each $\sigma \in \Sigma_\mathcal{C}$, are all created by toggling simple cycles clockwise and counterclockwise. Since TV distance is linearly separable in each entry, we may then partition the $d$ entries of the vector $b$ according to each simple cycle. We may then place a lower bound on the objective as follows: rather than having to share the vertex labels across all $d$ entries, we lower bound the objective by independently optimizing over each simple cycle: 
		\begin{align*}
		\inf_{\gamma, w} \frac{1}{2} \sum_{C \in \mathcal{C}} \sum_{j \in C} \Big|w_C \gamma_j - \frac{1+\epsilon b_{(j,C)}}{d}\Big| &= \inf_{\gamma, w} \frac{1}{2} \sum_{\sigma_i \in \sigma} \sum_{(j,C) \in \sigma_i} \Big|w_C \gamma_j - \frac{1+\epsilon b_{(j,C)}}{d}\Big|\\
		&\geq \frac{1}{2} \sum_{\sigma_i \in \sigma} \inf_{\gamma_\sigma, w_\sigma} \sum_{(j,C) \in \sigma_i} \Big|w_C \gamma_j - \frac{1+\epsilon b_{(j,C)}}{d}\Big|.
		\end{align*}
		Since the entries associated with a simple cycle $\sigma_i$ has a lower bound of $\frac{\epsilon}{2d}$, the lower bound on the objective is simply $\sum_{i \in [|\sigma|]} \frac{\epsilon}{2d} = \frac{\epsilon |\sigma|}{2d}$. The more cycles, the higher this lower bound is---thus, this quantity is at least $\frac{\epsilon}{2d} \frac{d}{2n} = \frac{\epsilon}{4n}$ even in the worst case. To summarize, we have shown that
		\begin{align*}
		\inf_{p \in \Piia} \tv{\qbe - p} \geq \frac{\epsilon |\sigma|}{2d} \geq \frac{\epsilon}{4n}
		\end{align*}
		
		Thus, if $\epsilon \geq \frac{2d}{|\sigma|}\delta = 2\mu(\sigma)\delta$, a sufficient condition for which is $\epsilon \geq 4n \delta$, then $\qbe \in \Mdelta$ for all $b \in \mathcal{B}_\mathcal{C}(\sigma)$. 
	\end{proof}
	
	\section{Lower bound on the simplified test}
	\label{sec:bounding}
	
	Now, we can focus on lower bounding the performance of the simplified hypothesis test of IIA given in \eqref{eq:test3}, using Le Cam's method \cite{yu1997assouad} as a starting point. Using $\gamma(\mathbb{P}_0, \mathbb{P}_1)$ to denote the average of type I and type II errors of the best possible test for distinguishing between the binary hypotheses $\mathbb{P}_0$ and $\mathbb{P}_1$ defined in equation \eqref{eq:test3}, a testing inequality owed to Le Cam \cite{yu1997assouad} states that
	\begin{align}
	\gamma(\mathbb{P}_0, \mathbb{P}_1) \geq \frac{1}{2} - \frac{1}{2}\tv{\mathbb{P}_0 - \mathbb{P}_1}.
	\end{align}
	Thus, the minimax risk for the testing problem \eqref{eq:test3} has a lower bound, and since that testing problem is simpler than the original problem, we have a minimax lower bound for the original problem. That is,
	\begin{align*}
	R_{N,\delta}(\Piia) \geq \frac{1}{2} - \frac{1}{2}\tv{\mathbb{P}_0 - \mathbb{P}_1}.
	\end{align*}
	What remains is meaningfully upper bounding $\tv{\mathbb{P}_0 - \mathbb{P}_1}$, the total variational distance between the hypotheses, in terms of the parameters of interest. 
	
	Consider that
	\begin{align*}
	||p-q||^2_{TV} \leq \frac{1}{4} \chi^2(p,q),
	\end{align*}
	where 
	\begin{align*}
	\chi^2(p,q) = \sum_{i \in [d]} \frac{p_i^2}{q_i} - 1 = \mathbb{E}_q\Big(\frac{p}{q}\Big)^2 - 1.
	\end{align*}
	If we can upper bound the $\chi^2$ distance between the two hypotheses, we then have an upper bound on the total variational distance. 
	
	We obtain the following bound between the uniform distribution $p_0$ and our mixture of perturbations $\qbe$, decomposed into two lemmas. The first lemma covers the aspects of the bounding exercise that are statistical, while the second lemma covers aspects that engage with the combinatorial structure of our set of perturbations derived from Eulerian orientations of the comparison incidence graph. In each case a low upper bound of the $\chi^2$ distance provides a high lower bound of the minimax risk of the IIA test. 
	
	\begin{lemma}
		\label{lemma:bound1}
		$$
		\chi^2(\mathbb{P}_1,\mathbb{P}_0) + 1 
		\le
		\frac{1}{M^2}\sum_{b,b' \in \Bset}\exp\Big(\frac{N \epsilon^2}{d}b^Tb'\Big) 
		,
		$$
		where $\Bset$ is a set of arbitrary perturbations $b$ of size $|\Bset|=M$ satisfying $b \in \lbrace -1, 1 \rbrace^d$, $\sum_{\substack{C \in \mathcal{C}\\ C \ni x}} b_{x,C} = 0, \forall x$, and $\sum_{y \in C} b_{y,C} = 0, \forall C$. 
	\end{lemma}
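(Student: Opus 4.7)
The plan is to obtain the claimed bound via the standard second-moment (Ingster-Suslina) identity for chi-squared distance between a simple hypothesis and a mixture, followed by the elementary inequality $(1+y)^N \le e^{Ny}$. The key observation is that $\mathbb{P}_0 = p_0^N$ and $\mathbb{P}_1 = \bar q_\epsilon^N = \frac{1}{M}\sum_{b \in \Bset} q_{b,\epsilon}^N$ are both product measures, so their likelihood ratio factorizes across samples, and this will let the inner product expression inside the expectation collapse to a single per-sample computation raised to the $N$th power.

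First I would write
\begin{align*}
\chi^2(\mathbb{P}_1,\mathbb{P}_0)+1
= \mathbb{E}_{\mathbb{P}_0}\!\left[\left(\tfrac{\mathbb{P}_1}{\mathbb{P}_0}\right)^{\!2}\right]
= \mathbb{E}_{b,b'}\,\mathbb{E}_{\mathbb{P}_0}\!\left[\tfrac{q_{b,\epsilon}^N\, q_{b',\epsilon}^N}{p_0^{2N}}\right],
\end{align*}
where $b,b'$ are drawn independently and uniformly from $\Bset$; the outer expectation and the product over samples exchange. Using the iid structure of $\mathbb{P}_0$, the inner expectation becomes
\begin{align*}
\prod_{j=1}^{N}\sum_{(x,C)}\frac{q_{b,\epsilon}(x,C)\,q_{b',\epsilon}(x,C)}{p_0(x,C)}
= \left(\sum_{(x,C)}\frac{q_{b,\epsilon}(x,C)\,q_{b',\epsilon}(x,C)}{p_0(x,C)}\right)^{\!N}.
\end{align*}

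Second, I would evaluate the per-sample inner product. Plugging in $p_0 = 1/d$ and $q_{b,\epsilon} = (1+\epsilon b_{(x,C)})/d$ gives
\begin{align*}
\sum_{(x,C)}\frac{q_{b,\epsilon}\,q_{b',\epsilon}}{p_0}
= \frac{1}{d}\sum_{(x,C)}(1+\epsilon b_{(x,C)})(1+\epsilon b'_{(x,C)})
= 1 + \frac{\epsilon}{d}\sum_{(x,C)} b_{(x,C)} + \frac{\epsilon}{d}\sum_{(x,C)} b'_{(x,C)} + \frac{\epsilon^2}{d}\,b^\top b'.
\end{align*}
The two linear terms vanish because the constraint $\sum_{y\in C} b_{y,C}=0$ for all $C \in \mathcal{C}$ (a defining property of $\Bset$) implies $\sum_{(x,C)} b_{(x,C)} = \sum_C \sum_{y\in C} b_{y,C} = 0$, and likewise for $b'$. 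Thus each per-sample factor equals $1 + \tfrac{\epsilon^2}{d}b^\top b'$.

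Finally, combining these ingredients yields
\begin{align*}
\chi^2(\mathbb{P}_1,\mathbb{P}_0)+1
= \frac{1}{M^2}\sum_{b,b' \in \Bset}\left(1 + \frac{\epsilon^2}{d}\,b^\top b'\right)^{\!N},
\end{align*}
and the lemma follows by applying $(1+y)^N \le e^{Ny}$ (valid for all real $y$, and in particular for $y = \epsilon^2 b^\top b'/d$ which is real, so $1+y \le e^y$ gives the bound after raising to the $N$th power). There is no real obstacle here beyond careful algebraic bookkeeping; the crucial structural fact being used is that the constraints defining $\Bset$ force the linear terms to vanish, which is precisely what makes the inner product identity clean and yields the compact exponential bound. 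The combinatorial work of controlling $b^\top b'$ for $b,b' \in \Bset(\sigma)$ is deferred to the next lemma.
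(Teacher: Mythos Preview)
Your proposal is correct and follows essentially the same approach as the paper: both expand $\chi^2(\mathbb{P}_1,\mathbb{P}_0)+1$ via the second-moment identity, exploit the i.i.d.\ structure to reduce to a per-sample computation, use the constraints on $b$ to kill the linear terms and obtain the exact equality $\frac{1}{M^2}\sum_{b,b'}\bigl(1+\tfrac{\epsilon^2}{d}b^\top b'\bigr)^N$, and then apply $1+y\le e^y$. One small remark: when you justify $(1+y)^N\le e^{Ny}$, the step of raising $1+y\le e^y$ to the $N$th power requires $1+y\ge 0$, which holds here since $|y|=\epsilon^2|b^\top b'|/d\le \epsilon^2\le 1$; you may want to make that explicit.
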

	Recall that $\Bset$, defined in Equation~\eqref{eq:bset_full}, is more general than $\Bset(\sigma)$, defined in Equation~\eqref{eq:bset}. The former set is not required to have any connection to a cycle decomposition and its orientations.
	
	\begin{proof}
		The proof of the lemma adapts procedures from prior work \cite{ingster1987minimax,paninski2008coincidence,wei2016sharp} to the particular setting of the testing problem in \eqref{eq:test3}. We begin by manipulating the various definitions involved in this distance, 
		\begin{align*}
		\chi^2(\mathbb{P}_1,\mathbb{P}_0) + 1 &=\mathbb{E}_{\mathbb{P}_0}\Big(\frac{d\mathbb{P}_1}{d\mathbb{P}_0}\Big)^2\\
		&=\mathbb{E}_{\mathbb{P}_0}\Big(\frac{\frac{1}{M}\sum_{b \in \Bset}\qbe^N}{{p_0}^N}\Big)^2\\
		&=\mathbb{E}_{\mathbb{P}_0}\frac{1}{M^2}\sum_{b,b' \in \Bset}\frac{\qbe^N \qbpe^N}{{(p_0}^N)^2}\\
		&=\mathbb{E}_{\mathbb{P}_0}\frac{1}{M^2}\sum_{b,b' \in \Bset}\frac{\qbe((x_1,C_1))\cdot\cdot\cdot \qbe((x_N,C_N)) \qbpe((x_1,C_1))\cdot\cdot\cdot \qbpe((x_N,C_N))}{p_0((x_1,C_1))^2\cdot\cdot \cdot p_0((x_N,C_N))^2}\\
		&=\frac{1}{M^2}\sum_{b,b' \in \Bset}\Big(\mathbb{E}_{p_0} \frac{\qbe \qbpe}{p_0^2}\Big)^N.
		\end{align*}
		Here the last equality follows from the fact that the $(x_i,C_i)$ are i.i.d., and hence the expectation of the product is the product of the expectations, which are all the same. Now we take a closer look at the term inside the exponentiation:
		\begin{align*}
		\mathbb{E}_{p_0} \frac{\qbe \qbpe}{p_0^2} &= \sum_{C \in \mathcal{C}} \sum_{x \in C} \frac{\qbe((x,C)) \qbpe((x,C))}{p_0((x,C))}\\
		&= \sum_{C \in \mathcal{C}} \sum_{x \in C} \frac{1}{p_0((x,C))}\Big[p_0((x,C)) + \frac{\epsilon b_{(x,C)}}{d}\Big]\Big[p_0((x,C)) + \frac{\epsilon {b'}_{(x,C)}}{d}\Big]\\
		&= \sum_{C \in \mathcal{C}} \sum_{x \in C} \Big[p_0((x,C)) + \frac{\epsilon b_{(x,C)}}{d} + \frac{\epsilon {b'}_{(x,C)}}{d} + \frac{1}{p_0((x,C))}\Big(\frac{\epsilon b_{(x,C)}}{d}\Big)\Big(\frac{\epsilon {b'}_{(x,C)}}{d}\Big)\Big]\\
		&= 1+\sum_{C \in \mathcal{C}} \sum_{x \in C} \Big[\frac{1}{p_0((x,C))}\Big(\frac{\epsilon b_{(x,C)}}{d}\Big)\Big(\frac{\epsilon {b'}_{(x,C)}}{d}\Big)\Big]\\
		&= 1 + \frac{\epsilon^2}{d}b^Tb',
		\end{align*}
		where the second line follows from applying the definition of $\qbe$, the third from expanding the terms, and the fourth from the constraints $b$ has to satisfy, and probabilities summing to one, and the last line is simply rearranging the terms and recognizing that $p_0((x,C)) = \frac{1}{d}$ for all $(x,C)$. Substituting the final expression into the previous result, we have
		\begin{align*}
		\chi^2(\mathbb{P}_1,\mathbb{P}_0) + 1 &=\frac{1}{M^2}\sum_{b,b' \in \Bset}\Big(1 + \frac{\epsilon^2}{d}b^Tb'\Big)^N\\
		&\leq \frac{1}{M^2}\sum_{b,b' \in \Bset}\exp\Big(\frac{N \epsilon^2}{d}b^Tb'\Big).
		\end{align*}\end{proof}
		
	The remaining analysis applies only to sets of perturbations $\Bset(\sigma)$ derived from an Eulerian orientation $\sigma$ of $G_\mathcal{C}$. With such a collection of perturbations, we can bound the role of $b^Tb'$ for each pair of perturbations. Recall that $\Bset(\sigma)$ was defined in \eqref{eq:bset} as the collection of $2^{|\sigma|}$ Eulerian orientations associated with a particular cycle decomposition $\sigma$. We now additionally define a set of vectors $\mathcal{A}_\mathcal{C}(\sigma)$, such that for every element $b \in \mathcal{B}_\mathcal{C}(\sigma)$, $\mathcal{A}_\mathcal{C}(\sigma)$ contains a vector $a \in \lbrace -1, 1\rbrace^{|\sigma|}$ to denote the directions of the $|\sigma|$ cycles that created $b$. Moreover, we introduce for every $\sigma$ a quantity $\alpha(\sigma) = \frac{1}{d} \sum_{\sigma_i \in \sigma}|\sigma_i|^2$ that serves as a normalized measure of the ``dispersion'' of the cycle decomposition $\sigma$. With this notation, we proceed to the following result. 
	\begin{lemma}
		\label{lemma:bound2}
		\begin{align*}
		\chi^2(\mathbb{P}_1,\mathbb{P}_0) + 1 
		\ \ \le \exp\Big(\frac{N^2 \epsilon^4}{2d} \alpha(\sigma) \Big),
		\end{align*}
		for any cycle decomposition $\sigma \in \Sigma_\mathcal{C}$.
	\end{lemma}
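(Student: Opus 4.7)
The plan is to start from the bound in Lemma \ref{lemma:bound1} and exploit the cycle-decomposition structure of $\Bset(\sigma)$ to evaluate the inner product $b^T b'$ and then factor the double sum. Concretely, each $b \in \Bset(\sigma)$ is in bijection with a direction vector $a \in \{-1,1\}^{|\sigma|}$ (one $\pm 1$ choice per cycle), and since edges in a single cycle $\sigma_i$ all receive the same sign under $b$ or the flipped sign, for any $b, b' \in \Bset(\sigma)$ with direction vectors $a, a'$, one obtains
\begin{equation*}
b^T b' \;=\; \sum_{i=1}^{|\sigma|} |\sigma_i|\, a_i a_i',
\end{equation*}
because the $|\sigma_i|$ edges in cycle $\sigma_i$ each contribute $+1$ when $a_i = a_i'$ and $-1$ when $a_i \neq a_i'$.

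The next step is to substitute this into Lemma \ref{lemma:bound1} and use that $M = 2^{|\sigma|}$ and that as $(b,b')$ ranges over $\Bset(\sigma)^2$, the pair $(a,a')$ ranges uniformly over $\{-1,1\}^{|\sigma|} \times \{-1,1\}^{|\sigma|}$. The exponential of a sum factors into a product, and by independence of the cycle directions the double expectation factors over cycles:
\begin{equation*}
\frac{1}{M^2}\sum_{b,b' \in \Bset(\sigma)} \exp\Big(\tfrac{N\epsilon^2}{d} b^T b'\Big)
\;=\; \prod_{i=1}^{|\sigma|} \Big(\tfrac{1}{4} \sum_{a_i,a_i' \in \{-1,1\}} \exp\Big(\tfrac{N\epsilon^2 |\sigma_i|}{d}\, a_i a_i'\Big)\Big)
\;=\; \prod_{i=1}^{|\sigma|} \cosh\!\Big(\tfrac{N\epsilon^2 |\sigma_i|}{d}\Big).
\end{equation*}

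The final step is a routine application of the elementary inequality $\cosh(x) \le \exp(x^2/2)$ to each factor, yielding
\begin{equation*}
\prod_{i=1}^{|\sigma|} \cosh\!\Big(\tfrac{N\epsilon^2 |\sigma_i|}{d}\Big)
\;\le\; \exp\!\Big(\tfrac{N^2 \epsilon^4}{2 d^2} \sum_{i=1}^{|\sigma|} |\sigma_i|^2\Big)
\;=\; \exp\!\Big(\tfrac{N^2 \epsilon^4}{2 d}\, \alpha(\sigma)\Big),
\end{equation*}
using the defining identity $\alpha(\sigma) = \tfrac{1}{d}\sum_i |\sigma_i|^2$. Combining this with Lemma \ref{lemma:bound1} gives the claim.

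The main obstacle, and the conceptual crux rather than a calculation, is recognizing that the cycle structure makes $b^T b'$ decompose into a weighted inner product of two independent uniform sign vectors $a, a'$ with per-coordinate weight equal to the cycle length; once this observation is in hand, the factorization and the $\cosh$ bound are standard. A minor bookkeeping point is to be careful that $M^2 = 4^{|\sigma|}$ matches the $\tfrac{1}{4}$ normalization per factor so the product decomposition is clean; aside from this, no further combinatorial or statistical input is required.
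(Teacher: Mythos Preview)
Your proof is correct and follows essentially the same approach as the paper: decompose $b^T b'$ cycle-by-cycle via the direction vectors $a,a' \in \{-1,1\}^{|\sigma|}$, factor the double sum over $\Bset(\sigma)^2$ into a product over cycles, recognize each factor as $\cosh(N\epsilon^2|\sigma_i|/d)$, and apply $\cosh(x)\le e^{x^2/2}$. One small wording slip worth correcting: the entries of $b$ along a single cycle do \emph{not} all receive the same sign---since the bipartite cycle alternates item-nodes and set-nodes, the constraint $|b_\ell - b_{\ell+1}| = 2$ forces the signs to alternate along the cycle; what is true (and what your formula actually uses) is that reversing the cycle's orientation negates \emph{every} entry on that cycle, so for each edge the product $b_{(x,C)} b'_{(x,C)}$ equals $a_i a_i'$, and your identity $b^T b' = \sum_i |\sigma_i|\, a_i a_i'$ and the remainder of the argument go through unchanged.
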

	\begin{proof}		
		We first seek to control the value $b^Tb'$. Using the vectors $a \in \mathcal{A}_\mathcal{C}(\sigma)$, we have that $b^Tb' = \sum_{i \in s} |\sigma_i| a_i a'_i$, where $a$ corresponds to the direction of the cycles for the vector $b$, and $a'$ for $b'$.
		
		Recalling that we chose our $M = 2^{|\sigma|}$ perturbations based on orienting the cycle decomposition $\sigma$, we begin where Lemma~\ref{lemma:bound1} left off and have
		\begin{align*}
		\chi^2(\mathbb{P}_1,\mathbb{P}_0) + 1 
		&\leq \frac{1}{M^2}\sum_{b,b'}\exp\Big(\frac{N \epsilon^2}{d}b^Tb'\Big)\\
		&=\frac{1}{2^{2|\sigma|}}\sum_{a,a'}\exp\Big(\frac{N \epsilon^2}{d}\sum_{\sigma_i \in \sigma} |\sigma_i| a_i a'_i \Big)\\
		&=\mathbb{E}\Big[\prod_{\sigma_i \in \sigma}\exp\Big(\frac{N \epsilon^2}{d} |\sigma_i| a_i a'_i \Big)\Big]\\
		&=\prod_{\sigma_i \in \sigma}\mathbb{E}\Big[\exp\Big(\frac{N \epsilon^2}{d} |\sigma_i| a_i a'_i \Big)\Big]\\
		&=\prod_{\sigma_i \in \sigma}\Big(\frac{1}{2}\exp\Big(\frac{N \epsilon^2}{d}|\sigma_i| \Big) + \frac{1}{2}\exp\Big(\frac{-N \epsilon^2}{d}|\sigma_i| \Big)\Big)\\
		&\leq \prod_{\sigma_i \in \sigma}\Big(\exp\Big(\frac{N^2 \epsilon^4}{2d^2}(|\sigma_i|)^2 \Big)\Big)\\
		&=\exp\Big(\frac{N^2 \epsilon^4}{2d}\alpha(\sigma) \Big),
		\end{align*}
		where the second line follows from controlling $b^Tb'$ with $a$ vectors, the third line follows from the second line being equivalent to the expectation over every pair of vectors $a$, $a'$, the fourth line follows from the fact that each element of vector $a$ is independent from the other elements, the fifth line follows from evaluating the expectation for a pair of independent Rademacher variables, the sixth line follows from the fact that $\frac{1}{2}\exp(x) + \frac{1}{2}\exp(-x) \leq \exp(\frac{x^2}{2})$, and the final line follows from rearranging the terms and applying the definition of $\alpha(\sigma)$. 
	\end{proof}
	\section{Proof of main result}
	\label{sec:thmproof}
	
	We now use Lemma~\ref{lemma:bound1} and \ref{lemma:bound2} to prove the main result in Theorem~\ref{thm:lower_bound}. 
	
	\begin{proof}
		\begin{align*}
		R_{N,\delta}(\Piia) 
		&\geq \frac{1}{2} - \frac{1}{4}||\mathbb{P}_0 - \mathbb{P}_1||_{TV}
		\geq
		\frac{1}{2} - \frac{1}{4}\sqrt{\chi^2(\mathbb{P}_1,\mathbb{P}_0)} \\
		&\geq \frac{1}{2} - \frac{1}{4}\Big(\exp\Big(\frac{N^2 \epsilon^4}{2d}\alpha(\sigma) \Big) - 1\Big)^{\frac{1}{2}}  \\
		&\geq
		\frac{1}{2} - \frac{1}{4}\Big(\exp\Big(\frac{8\mu(\sigma)^4\alpha(\sigma)N^2 \delta^4}{d} \Big) - 1\Big)^{\frac{1}{2}},
		\end{align*}
		where the last step follows from substituting $\epsilon = 2\mu(\sigma)\delta$. The final expression makes clear the dependence of the lower bound on $\sigma$. Namely, the bound  applies to any $\sigma$ of  $G_\mathcal{C}$, and so $\sigma$ can be chosen to produce the best lower bound. This observation means we can restate the risk bound as
		\begin{align*}
		R_{N,\delta}(\Piia) 
		&\geq \max_{\sigma \in \Sigma_{\mathcal{C}}} \frac{1}{2} - \frac{1}{4}\Big(\exp\Big(\frac{8\mu(\sigma)^4\alpha(\sigma)N^2 \delta^4}{d} \Big) - 1\Big)^{\frac{1}{2}},
		\end{align*}
		where we remind the reader that $\Sigma_\mathcal{C}$ is the collection of all possible edge-disjoint cycle decompositions of $G_\mathcal{C}$.
		Indeed, the smaller the size of each cycle in the decomposition of $G_\mathcal{C}$, the stronger the lower bound gets, since both $\mu(\sigma)$, the average cycle size, and $\alpha(\sigma)$, the cycle dispersion index, become smaller. 
	\end{proof}
We state the following risk bound for arbitrary $\mathcal C$ as a corollary, using our best arguments for bounding $\mu(\sigma)^4 \alpha(\sigma)$ for general $\mathcal C$ in terms of $d$ and $n$, though note that for specific $\mathcal C$ better bounds are possible, as discussed further in Section~\ref{sec:specifics}.

	\begin{cor}\label{cor:global_lower_bound}
		For any $G_\mathcal{C}$, $R_{N,\delta}(\Piia)$ may be globally lower bounded as
		\begin{align*}
		R_{N,\delta}(\Piia) \geq \frac{1}{2} - \frac{1}{4}\Big(\exp\Big(
		c \min \Big\{\frac{(d^4\log(n)^5 + d^3(n-\log(n))n\log(n)^4)N^2 \delta^4}{d(d - 4n + 8\log(n))^4}, \frac{n^5N^2 \delta^4}{d} \Big\} 
		\Big) - 1\Big)^{\frac{1}{2}}.
		\end{align*}
	\end{cor}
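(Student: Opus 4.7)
The plan is to apply Theorem~\ref{thm:lower_bound} under two different cycle-decomposition strategies, upper-bound $\mu(\sigma)^4\alpha(\sigma)$ in each, and retain the smaller quantity inside the exponential. Because the right-hand side of Theorem~\ref{thm:lower_bound} is monotonically weakening in $\mu(\sigma)^4\alpha(\sigma)$, any explicit upper bound on $\min_{\sigma \in \Sigma_\mathcal{C}} \mu(\sigma)^4\alpha(\sigma)$ in terms of $n$ and $d$ can be substituted directly. Producing two such bounds and taking their minimum recovers the $\min\{\cdot,\cdot\}$ structure of the corollary; the constant $c$ will absorb the constants introduced along the way.

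For the second (``worst case'') term inside the min, the approach is to invoke the pigeonhole argument from Section~\ref{subsection:comparison_graph}: any cycle in $G_\mathcal{C}$ of length exceeding $2n$ must revisit some item-node and therefore split, so there exists a $\sigma$ with $\max_i |\sigma_i| \leq 2n$. This immediately yields $\mu(\sigma) \leq 2n$ and $\alpha(\sigma) = \tfrac{1}{d}\sum_i |\sigma_i|^2 \leq \max_i|\sigma_i| \leq 2n$, so $\mu(\sigma)^4 \alpha(\sigma) \leq 32\,n^5$. Substituting into Theorem~\ref{thm:lower_bound} produces the $n^5 N^2 \delta^4 / d$ term in the minimum.

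For the first (``dense'') term, the approach is to invoke the short cycle decomposition for hypergraph incidence graphs (Lemma~\ref{lemma:bipartite_cycle_decomposition}, as acknowledged in the title footnote), which guarantees a $\sigma$ in which all but $O(n - \log n)$ of the $d$ edges lie in short cycles of length $\Theta(\log n)$, while the residual edges form a small number of ``residual'' cycles of length at most $2n$. Writing $|\sigma|$ and $\sum_i |\sigma_i|^2$ in terms of this bimodal length distribution, with $|\sigma|$ comprised of $\approx (d - 4n)/(2\log n)$ short cycles together with a constant number of long cycles, and using the identity $\mu(\sigma)^4 \alpha(\sigma) = d^3 \sum_i |\sigma_i|^2 / |\sigma|^4$, the algebra should collapse to the stated expression $\frac{d^4 \log^5(n) + d^3(n - \log n) n \log^4(n)}{d(d - 4n + 8\log n)^4}$ up to absolute constants absorbed into $c$.

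The main obstacle is the precise bookkeeping in the dense case: tracking how many edges are routed into short versus residual cycles (which is what produces the $4n - 8\log n$ offset in the denominator), correctly summing $|\sigma_i|^2$ contributions across both cycle classes, and verifying that the algebra yields exactly the form stated. The worst-case bound is by contrast essentially one line given the pigeonhole observation already established in Section~\ref{subsection:comparison_graph}. Once both estimates are in place, taking their minimum and substituting into Theorem~\ref{thm:lower_bound} produces the corollary.
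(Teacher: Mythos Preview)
Your proposal is correct and follows essentially the same route as the paper: the paper packages your two strategies into a single auxiliary result (Lemma~\ref{lemma:Gc_cycle_decomposition}), which simultaneously records the pigeonhole bound $\mu(\sigma),\alpha(\sigma)\le 2n$ and the short-cycle bounds coming from Lemma~\ref{lemma:bipartite_cycle_decomposition}, and then substitutes $\mu(\sigma)^4\alpha(\sigma)$ into Theorem~\ref{thm:lower_bound}. One small correction: the number of residual edges routed into long cycles is at most $4n$ (not $O(n-\log n)$); the $(n-\log n)$ factor in the final numerator arises instead from the $\alpha(\sigma)$ bookkeeping, where the long-cycle contribution to $\sum_i|\sigma_i|^2$ is $O(n^2)$ and combines with the short-cycle contribution $O(d\log n)$ to give $d\log n + n(n-\log n)$ up to constants.
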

	\begin{proof}
		Using Lemma \ref{lemma:Gc_cycle_decomposition} in the Appendix, the quantities $\alpha(\sigma)$ and $\mu(\sigma)$ may be globally upper bounded for any $G_\mathcal{C}$ by the following expressions: 
		\begin{align*}
		&\mu(\sigma) \leq \min \Big\{\Big(\frac{d}{d - 4n + 8\log_2 (n)}\Big)4\log_2 (n), 2n\Big\}, \\
		&\alpha(\sigma) \leq \min \Big\{4 \log_2(n) + \frac{4n(2n - 4 \log_2(n))}{d}, 2n\Big\}.
		\end{align*}
		Using these expressions and with a bit of algebra, we can calculate $\mu(\sigma)^4\alpha(\sigma)$ and substitute it into the main result of Theorem \ref{thm:lower_bound} to obtain the lower bound of the corollary.
		
	\end{proof}
	It is worthwhile to briefly unpack the expressions for $\mu(\sigma)$ and $\alpha(\sigma)$ that result in the rather complex lower bound of Corollary \ref{cor:global_lower_bound}. Noteworthy is the $\min$ that appears in both expressions. The term results from the limitations of what can be said about cycle decompositions for general bipartite graphs. On one hand, the pigeonhole principle guarantees that for all comparison incidence graphs $G_\mathcal{C}$, cycles can be of size at most $2n$. Hence for Eulerian graphs we can ensure that the average cycle size $\mu(\sigma)$ and the cycle dispersion index $\alpha(\sigma)$ are each always at most $2n$. 
	
	A cycle size that is linear in $n$ is, however, unsatisfying for denser bipartite graphs, where we would expect cycles to be considerably smaller than $2n$. For the denser bipartite graphs, Lemma \ref{lemma:bipartite_cycle_decomposition} in the Appendix reveals that most cycles are indeed of order $\log(n)$, but a (often small) fraction of cycles will be of order $n$. This statement is tight for bipartite graphs. When $d$ is very large (generally, more than order $n^2$), such as the ``all subsets'' case we discuss in Section \ref{subsection:all}, the fraction of order-$n$ cycles becomes negligible, and both $\alpha(\sigma)$ and $\mu(\sigma)$ can be upper bounded by some constant times $\log(n)$. However, when $d$ is very small, the graph is sparse and a majority of the cycles are order $n$, and the order $n$ terms dominate the expressions. The complex expressions of $\alpha(\sigma)$ and $\mu(\sigma)$ seek to precisely document how the transition from majority length-$n$ cycles to majority length-$log(n)$ cycles happens as the graph gets more dense, in terms of $d$ and $n$. The complexity then carries over to final risk bound, but also renders it useful for a wide range of scenarios.
	
	As a closing remark, and a preview into the next section, we note that the above reduction to a global lower bound, though illustrative in a scenario where $d$ is exponential in the size of $n$, falls short for when $d$ is smaller. Consider instead the $\mathcal{C}$ that consists of all pairwise comparisons, for $n = 6x+1$ for any natural number $x$; following an elementary result of Kirkman \cite{kirkman1847problem,bollobas1986combinatorics}, the graph $G_\mathcal{C}$ corresponding to such a $\mathcal{C}$ is Eulerian and can always be decomposed into cycles of size 6, making both $\alpha(\sigma)$ and $\mu(\sigma)$ constants. We elaborate on these different cases in greater detail in the following section. 
	
	\section{Lower bounds for specific comparison incidence graphs}
	\label{sec:specifics}
	We find it fruitful to leave both $\alpha(\sigma)$ and $\mu(\sigma)$ as a part of the lower bound; it renders the lower bound sharper, but also provides guidance for experimental design. Suppose $\mathcal{C}$ were chosen in a particular experiment, where the experimenter seeks to discover violations of IIA with a limited sample budget $N$. Our lower bound demonstrates the minimum worst case probability of error for any test for violations of IIA given a budget of samples $N$, but is clearly dependent on the choice of $\mathcal{C}$. The presence of a sample budget creates an essential trade off: while the experimenter would like to broaden the scope of her test, attempting to study more choice sets where IIA could be violated, adding sets to $\mathcal{C}$ would increase the minimum error of any tester---perhaps beyond an acceptable threshold. The minimum error can increase in two ways: due to the resulting increase in $d$ from the additional sets, but also due to the changes in $\alpha(\sigma)$ or $\mu(\sigma)$ from the changing structure of the comparison incidence graph $G_\mathcal{C}$. We use this setting---one of seeking to maximize coverage of choice sets given a fixed number of samples and risk threshold---to motivate a closer inspection of our lower bound. 
	
	Before we proceed, we note that while comparing our lower bounds across different $\mathcal{C}$, we omit a discussion of the mild differences in behavior of the separation $\delta$ across various $\mathcal{C}$. One difference, a consequence of Lemma $\ref{lemma:seperation}$, is that $\delta$ can be at most $(2\mu(\sigma))^{-1}$ and therefore loosely depends on both the support size $d$ and the cyclicality of $G_\mathcal{C}$. We omit this discussion, and more generally omit subscripting $\delta$ to not stray from the total variation distance metric's setting-independent property as the maximal gap in probability between any two distributions. Our omission is consistent with other analyses of testing~\cite{paninski2008coincidence,wei2016sharp,valiant2017automatic}, and consistent with the idea that $\delta$ is regarded as a small value kept as close to zero as allowable by the practical constraints of dimension and sample size.

	\subsection{All subsets}
	\label{subsection:all}
	IIA is defined as a property of the complete choice system of $n$ items, and hence, any true test of IIA must encompass all the subsets of an item universe. We thus focus first on the results of our lower bound on this problem instance. We again consider the slightly easier setting amenable to our analysis where we only study (all) subsets of even size. Over the unique even subsets of a choice system, a simple calculation reveals that every item then appears the following number of times,
	\begin{align*}
	\sum_{k=1}^{\floor{n/2}} \binom{n-1}{2k-1} = 2^{n-2},
	\end{align*}
	which is always an even number. Hence, the resulting comparison graph is Eulerian, and our lower bound applies with $d=n2^{n-2}$. Given the sheer size of $d$, we do not lose much in the flavor of our statement by using the upper bounds for $\alpha(\sigma)$ and $\mu(\sigma)$ from Lemma \ref{lemma:Gc_cycle_decomposition}. In fact, since $d$ is exponentially large in $n$, the expressions for both $\alpha(\sigma)$ and $\mu(\sigma)$ can be upper bounded by $c \log(n)$ where $c$ is some constant\footnote{This argument is detailed in Corollary \ref{cor:all_subsets_cycle_decomposition}, a corollary of Lemma~\ref{lemma:Gc_cycle_decomposition}, in the Appendix.}. We then have the following lower bound for the setting of all subsets of even size, and consequently, a lower bound for the setting of all subsets\footnote{Conveniently, $d=n2^{n-1}$ for the case of all subsets (even or odd sizes), and we can use this value in our lower bound as it is within a constant factor of $d=n2^{n-2}$ for the all even subsets case.}:
	\begin{align}
	R_{N,\delta}(\Piia) \geq \frac{1}{2} - \frac{1}{4}\Big(\exp\Big(\frac{c \log(n)^5N^2 \delta^4}{n2^{n-1}} \Big) - 1\Big)^{\frac{1}{2}}.
	\end{align}
	That is, the best possible test for IIA has worst case error arbitrarily close to $1/2$---the error of a random guess---until the number of sample is exponentially large in $n$. 
	
	The result is insightful because it brings to light a special concept: although IIA is simple to represent, in order to test whether a choice system obeys IIA, the test must also consider every possible way a choice system can violate IIA in order to produce a meaningful certificate. Missing even one of those violations results in a test with arbitrarily large error in the worst case where the deviation is represented by that particular violation. Moreover, the number of deviations from IIA are not just numerous, but also close to IIA distributions in a statistical sense, thereby requiring an immensely large number of samples to guarantee a low probability of error.
	
	Because IIA is a property about a combinatorial system, its conceptual simplicity does not aid in distinguishing it from the myriad of combinatorial alternatives. This point is our main takeaway: even the simplest, most storied theoretical concept in discrete choice is practically impossible to test for. 
	
	In light of this pessimistic result, we proceed to take a more defensive, practical viewpoint. In most practical settings, violations of IIA are not sought after in \textit{any} choice set as much as choice sets of meaningful relevance. Often in practice, choice sets do not exceed a modest fixed size (e.g., pairs or triplets). Moreover, there may be sparsity even among the set of small choice sets, as practical constraints prevent the comparison of all items. In these settings, a practitioner does not ever care to test the combinatorially many different violations of IIA, but rather only cares about a restricted set of violations possible on the given choice sets. In other words, a choice system should  only be regarded as deviating from IIA if it deviates within the sets of interest; else, a failure to reject is tantamount to the choice system obeying IIA.
	
	\subsection{All pairs}
	Consider then the setting where $\mathcal{C}$ is restricted to only contain (all possible) pairs. This setting, sometimes called pairwise comparisons, is prevalent in discrete choice and most commonly found in match-up modelling in sports or games. In match-ups, for instance, the relevant violations of IIA are certainly restricted to pairs: sets of three do not have any meaning. Given a collection $\mathcal{C}$ of all possible pairs of a universe of $n$ items, we observe that an item appears $n-1$ times over all of the unique choice sets in the collection, and so $d=n(n-1)$. Hence, when $n$ is odd, $d$ is even, and then $G_\mathcal{C}$ is Eulerian (since all pairs are already of even size). We divide up the odd $n$ into three different cases, $n=6x+1$, $n=6x+3$, and $n=6x+5$ where $x$ is any natural number. As previewed earlier, by a result due to Kirkman \cite{kirkman1847problem}, in the case of $n=6x+1$, $G_\mathcal{C}$ can always be decomposed into edge disjoint cycles of size $6$. The same argument applies to the $n=6x+3$ case. We come about this result by considering first a graph on $n$ nodes $H$ with undirected edges between two nodes compared in a choice set in $\mathcal{C}$. We note that every triangle in $H$ then maps uniquely to a cycle of size $6$ in the bipartite graph $G_\mathcal{C}$, containing both the items in the vertices of the triangle in $H$, as well as the vertices corresponding to the choice sets those comparisons were made in. Since $\mathcal{C}$ contains all pairwise comparisons, $H$ is the clique $K_n$, and Kirkman's result that $K_n$ cliques can be decomposed into edge-disjoint triangles when $n=6x+1$ and $n=6x+3$ is equivalent to $G_\mathcal{C}$ being decomposed into cycles of size 6. As a result, $\alpha(\sigma) = 6$, a term that can be swept into the constant $c$. 
	
	Then, for $n=6x+5$, separate prior work shows that $H$ can almost be decomposed into triangles, but $4$ edges remain \cite{feder2012packing}. Since the nodes of $H$ have even degree, the removal of all the triangles still leaves all the nodes with even degree. Thus, the remaining edges form a cycle, and hence map to a cycle of size $8$ in $G_\mathcal{C}$. Thus, $\alpha(\sigma) = 6 + \frac{16}{d}$, which can again be upper bounded by a constant and swept into $c$. Thus, for the setting of all pairs with $n$ odd, we have the lower bound,
	\begin{align}
	R_{N,\delta}(\Piia) \geq \frac{1}{2} - \frac{1}{4}\Big(\exp\Big(\frac{c N^2 \delta^4}{n(n-1)} \Big) - 1\Big)^{\frac{1}{2}}.
	\end{align}
	As we would expect, our lower bound is considerably more optimistic in this setting. Indeed, the number of samples $N$ only need to be on the same order as the number of items $n$ for our lower bound to fall away. This mild scaling demonstrates the value of restraining the sets of interest to just the pairs. While one cannot say anything about deviations from IIA beyond the pairs when looking only a pairwise data, such a restriction provides great advantages in sample efficiency.
	
	\subsection{The single big cycle}
	We lastly consider a specialized $\mathcal{C}$ consists of only pair comparisons that form a simple ``cycle'' among the $n$ items: e.g. $\{i,j\}, \{j,k\}, \{k,l\}...\{z,i\}$. Here, $d = 2n$. We note that for this setting, the upper bound on $\alpha(\sigma)$ of $2n$ is sharp; that is, the only simple cycle decomposition of $G_\mathcal{C}$ is $G_\mathcal{C}$ itself, and hence $\alpha(\sigma)$ is no smaller than $2n$. We similarly have $\mu(\sigma)$ = $2n$. We then have, for this setting, a lower bound of:
	\begin{align}
	R_{N,\delta}(\Piia) \geq \frac{1}{2} - \frac{1}{4}\Big(\exp\Big(c n^4N^2 \delta^4 \Big) - 1\Big)^{\frac{1}{2}}.
	\end{align}
	That is to say, our lower bound falls away fast, regardless of $n$. Indeed, as written, the lower bound falls away faster for larger $n$; this errant behavior, however, we conclude comes largely from a likely lack of sharpness in characterizing the projection distance for a simple cycle in Lemma \eqref{lemma:seperation}, a possible area of improvement that we will leave for future work. Ignoring this added factor of $n^4$ contributed by $\mu(\sigma)$, we still see a rate falling off independent of $n$ or $d$. 
	
	This lack of a pessimistic lower bound translates to a cause for optimism, and a guideline for experimental design. In settings of highly limited samples, with an experimenter that can choose the $\mathcal{C}$ for which to allocate those samples, our lower bound ``suggests'' choosing a simple cycle for the allocation.
	In a choice system rife with deviations from IIA, most cycles would contain some of these violations. The benefit of allocating samples to only these cycles, as opposed to a greater number of sets such as all pairs, however, is a low rate of test error. That is, our lower bound guarantees that with a very small number of samples, the setting of ``all pairs'' will necessarily have high error for any test. In the setting of the ``single big cycle'', however, our lower bound does not provide such a pessimistic result; should the test reject, an experimenter could be more confident of the accuracy of the rejection, and still achieve the overall goal of detecting deviations from IIA. Should it not, the experimenter would at least know to seek out different sets to test for violations. By restricting the test to a cycle in these low sample scenarios, a researcher trades a result guaranteed to be errant for a more conservative result likely to be veritable, and is therefore no worse off. This insight is one major deliverable from our structure-dependent finite sample analysis. 
	
	\subsection{A fixed $w(C)$?}
	Unlike an observational study, where $\mathcal{C}$ and $w(C)$ are taken as given, it is not unrealistic in experimental design for both to be \textit{chosen}. In this section, we have so far considered the benefit of carefully choosing $\mathcal{C}$. By potentially also choosing $w(C)$, however, an experimental can potentially allocate samples to every choice set in $\mathcal C$. At first glance, it may appear that our lower bounds do not apply for some well-chosen predetermined $w(C)$. Crucial to our bound is the reduction of the general IIA test \eqref{eq:iiatest}, which encompasses all IIA choice systems in the null (and hence, all $w(C)$), to the test of \eqref{eq:test2}, which considers only the uniform choice system in the null. For the uniform choice system, $w(C) = \frac{|C|}{d}$, a weight proportional to the size of each $C$. The testing reduction is valid because $\eqref{eq:iiatest}$ contains all $w(C)$ within its null. For a fixed $w(C)$, specifically, a $w(C) \neq \frac{|C|}{d}$, the validity of the reduction to test \eqref{eq:test2} is no longer immediately obvious. The matter of a fixed $w(C)$ invokes a deeper question: are our lower bounds, and their associated pessimism, a result of $w(C)$ not being fixed?
	
	Fortunately, our lower bounds apply regardless of whether $w(C)$ is chosen or not. Consider first that $w(C)$ has no bearing at all on whether a choice system is IIA. Next, observe that the lower bounds in this paper are ``minimax'', studying only the worst case scenario for every tester $\phi$. Finally, note that all of the $\mathcal{C}$ studied so far form $G_\mathcal{C}$ that are Eulerian, which have exact cycle decompositions. A consequence of this property is that \textit{every} entry within a choice system is part of \textit{some} cycle, and hence can be part of some deviation from IIA. Because we only bound a test's worst case error, any potential direction of exit is a cause for concern as it may contribute to the worst case error. What is, then, the easiest $w(C)$ for testing IIA for the settings of this work? A $w(C)$ that prioritizes every entry of the choice system (and hence, every avenue of IIA departure) equally. This is a $w(C)$ proportional to size of each $C$, the $w(C)$ of \eqref{eq:test2}'s uniform choice system. Were $w(C)$ not entry-wise proportional, then there would be some entry with less than uniform weight, and hence the worst case for that testing problem would now be strictly harder. Because $w(C) = \frac{|C|}{d}$ is the easiest $w(C)$, a problem with a fixed $w(C)$ could only be harder. Thus, our lower bounds carry over to those settings.
	
	\section{Conclusions}
	We develop a structure-dependent finite sample lower bound on the minimax risk of all hypothesis tests for IIA. A first consequence of this bound is rather grave: no general test for IIA can provide a meaningfully small worst case error without a number of samples exponential in the number of items. Our methods, however, provide more than just pessimism: by characterizing the lower bound for a wide range of $\mathcal{C}$, we demonstrate the value of ``researcher priors''. Indeed, a prior that IIA violations lie only in the set of pairs vastly reduces the complexity of our lower bound. By incorporating the combinatorial structure of $\mathcal{C}$ into our testing problem, we gain further insight into the sample efficiency of specific structures. As a case study, we consider the optimism hinted by a rapidly diminishing ``dimension-free'' lower bound for the special case of a cycle, a feature reminiscent of the complexity of property testing for cyclicality.
	
	While the IIA model is well specified, the complexity of a testing problem is dependent on both the complexity of the model class, as well as the complexity of possible departures. Research priors can limit the set of possible departures from IIA, and provide valuable performance improvements. With this perspective, we briefly assess the function of a widely used existing test for IIA, the Hausmann-McFadden test \cite{hausman1984specification}. With this test, the proposal is to split a dataset and study two scenarios---the maximum likelihood IIA distribution with and without the presence of an item. Such a split may be viewed through the lens of two-sample hypothesis testing, where the question concerns whether the two scenarios produce the same distribution or whether they do not. The Hausmann-McFadden test is {\it not} a general test for IIA. Rather, it may be viewed as testing whether a single item $i$ (the item being removed) changes the IIA projection. Viewing the Hausmann-McFadden test in this lens demonstrates its potential merit---the implicit prior imposed by the test does not seem immediately unreasonable, but its value should be verified empirically. Such a view also defines the tests' limitations; there are many ways of violating IIA that preserve total orderings with and without a single item. 
	
	In contrast to the Hausmann-McFadden test, the \textit{universal logit test} \cite{mcfadden1977application}, a likelihood ratio test comparing the MNL model to an arbitrary choice system, \textit{is} a general test for IIA that is asymptotically valid. However, the accuracy of the universal logit test depends on an accurate estimate of a choice system, which requires samples on the same order as $d$. As expected, this sample complexity exceeds the lower bound in this paper for general tests for IIA, which is (up to log factors for dense $\mathcal{C}$) samples on the same order as $\sqrt{d}$. The performance of the universal logit test and the requirement of our lower bound thus form a wide gap between sufficiency and necessity.
	
	Akin to the Hausmann-McFadden test's implicit prior, yet another notion of a prior stems from a valid model for departures from IIA. Such ``model-based'' tests are scant in the field of discrete choice, for the main reason that the models themselves are often uninterpreble, inferentially intractable, or both. Hausman and McFadden originally proposed a model-based test centered around nested logit model in their early work on testing IIA \cite{hausman1984specification}. A test against a nested logit model requires the specification of a candidate nesting, much as the Hausmann-McFadden test requires the specification of an item for removal. This high-dimensional specification requirement can be a tall order. Furthermore, inference for the nested logit model can be very difficult \cite{benson2016relevance}. Recent work on model-based testing proposes the context-dependent utility model (CDM) \cite{seshadri2019raw}, a model capable of modelling departures from IIA in terms of a ``pairwise dependence on irrelevant alternatives'' (and departures from the class of random utility models (RUMs) more generally), while still exhibiting ease of optimization (a convex likelihood), tractable finite-sample uniform convergence guarantees, and parametric efficiency. A good model can serve as an excellent platform for principled testing of departures from IIA, with the obvious limitation that departures in the blindspots of the model will remain untested.
	
	We do not propose any new tests in this work---only lower bounds on the sample complexity of the best possible tests. We hope that our lower bound results, with proof techniques that harness the structure-dependent boundary between IIA and more general choice systems in the finite sample domain, can open the door to new constructive tests that test IIA rigorously and efficiently, perhaps even optimally (fully allowing for the possibility that our lower bounds may not be tight). Our \textit{sublinear} sample complexity lower bound seems to place the IIA testing problem at the intersection of property testing and sublinear algorithms. Much progress has been made at this intersection very recently due to the considerable attention it has received from the statistics, information theory, and theoretical computer science communities. The wide availability of recent results and the immense practical importance thus makes the pursuit of a sublinear IIA testing procedure especially attractive. 
	
	Indeed, discrete choice has no shortage of important questions that are similarly ripe for study, especially with regards to testing properties (sometimes called axioms) of choice data. We briefly discuss the idea of testing regularity, RUM-representability, and strong stochastic transitivity. \textit{Regularity} asks whether the probability of choosing an alternative from a choice set is non-decreasing if the choice set is strictly enlarged. 
	Regularity is perhaps the most studied property of RUMs, and a test is akin to a test of monotonocity for a set function. Another example takes a step beyond regularity, a procedure which asks whether a choice system is {\it RUM-representable}\footnote{As shown by Falmagne, regularity conditions are the first set of conditions that form the larger set of Block-Marschak inequalities \cite{block1959random} that fully characterize RUMs.}. Though first posed as a testing question by Falmagne \cite{falmagne1978representation}, this question continues to elude researchers. Recent work \cite{jagabathula2018limit} provides a complete structure-dependent characterization of the algorithmic difficulty of estimating RUMs, but does not discuss any statistical difficulties, or testing. Another important challenge would be to develop an efficient procedure for testing {\it strong stochastic transitivity (SST)}, which asks whether a choice system can be determined simply by a total ordering over the items. SST remains crucial to a lasting legacy of empirical work \cite{davidson1959experimental,mclaughlin1965stochastic, tversky1969substitutability}; though estimating a model that satisfies SST is well characterized \cite{chatterjee2015matrix,shah2016stochastically}, the testing question remains largely unexamined. We thus hope that our work invites not only a rigorous rethinking of the IIA testing question, but also opens the door more broadly to a rigorous rethinking of testing in discrete choice.
	
	\bibliographystyle{ACM-Reference-Format}
	\bibliography{iia}
	
	\clearpage
	\begin{appendix}
		\section{Appendix}
		\subsection{Equivalent reformulation with level $\alpha$ tests}
		\label{app:alphatest}
		We define a test $\phi$ as a map, $\phi : (x_1,C_1),...,(x_N,C_N) \mapsto \{0,1\}$ from the dataset to a decision to reject the null $H_0$ as before. This time, however, define $\Phi_{N,\alpha} = \lbrace \phi : \ \sup_{p \in \mathcal{P}_0} p^N(\phi(\mathcal{D}_N) = 1) \leq \alpha \rbrace$ as the set of all level $\alpha$ tests. Since the type I error is always controlled, the risk of interest is now only the type II error. Thus, the minimax risk is:
		\begin{align*}
		R_{N,\delta,\alpha}(\Piia) = \inf_{\phi \in \Phi_{N,\alpha}} \sup_{q \in \Mdelta} q^N(\phi(\mathcal{D}_N) = 0). 
		\end{align*}
		The minimax risk $R_{N,\delta,\alpha}(\mathcal{P}_0)$ is lower bounded as \[1 - \alpha - \frac{1}{2} \Big(\exp\Big(
		\frac{8\mu(\sigma)^4\alpha(\sigma)N^2 \delta^4}{d} -1 
		\Big)^{\frac{1}{2}} \leq R_{N,\delta, \alpha}(\mathcal{P}_0), \] where the remaining terms are defined in the same manner as Theorem \ref{thm:lower_bound}. The result follows from applying Theorem \ref{thm:lower_bound} to the sum of Type I and Type II errors as before, and then simply upper bounding the Type I error by $\alpha$. While straightforward, we restate it this way to make clear the consequence of the bound for level $\alpha$ tests: when $\frac{8\mu(\sigma)^4\alpha(\sigma)N^2 \delta^4}{d}$ is small, the power is just $\alpha$, so the test is just some coin with probability $\alpha$ in the worst case.
		\subsection{Lemmas and facts: choice systems and testing}

		\begin{fact}\label{cvx_hull_dense}
			The convex hull of $\Piia$ is dense in $\Pall$.
		\end{fact}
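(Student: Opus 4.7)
My plan is to reduce the density statement to an extreme-point argument, and then verify by an explicit approximation that every extreme point of $\Pall$ is already a limit of points in $\Piia$ itself. Viewing $\Pall$ as the $(d{-}1)$-dimensional probability simplex over pairs $(x,C)$ with $x\in C$, its extreme points are exactly the point masses $\delta_{(x^*,C^*)}$. If the closure of $\Piia$ contains each such $\delta_{(x^*,C^*)}$, then the closed convex hull of $\Piia$ contains the closed convex hull of the extreme points of $\Pall$, which is $\Pall$ itself (any point of a finite-dimensional simplex is a convex combination of its vertices). Combined with $\Piia\subseteq\Pall$, this yields $\overline{\text{conv}(\Piia)}=\Pall$, i.e., the convex hull of $\Piia$ is dense in $\Pall$.

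The main work is therefore the approximation step. Fix a target pair $(x^*,C^*)$, and for small $\epsilon>0$ set
$$\gamma^{(\epsilon)}_{x^*}=1-(n-1)\epsilon,\quad \gamma^{(\epsilon)}_z=\epsilon\ (z\ne x^*),\quad w^{(\epsilon)}(C^*)=1-(m-1)\epsilon,\quad w^{(\epsilon)}(C)=\epsilon\ (C\ne C^*).$$
For $\epsilon$ sufficiently small, both vectors lie strictly inside their respective simplices, so the induced $q^{(\epsilon)}$ lies in $\Piia$. A short case analysis on $(y,C)$ shows that the denominator $\sum_{z\in C}\gamma^{(\epsilon)}_z$ is either $1-O(\epsilon)$ (when $x^*\in C$) or $\epsilon|C|$ (when $x^*\notin C$), while the numerator $w^{(\epsilon)}(C)\gamma^{(\epsilon)}_y$ equals $(1-O(\epsilon))^2$ precisely when $(y,C)=(x^*,C^*)$ and is $O(\epsilon)$ in every other case. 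Plugging these estimates in yields $q^{(\epsilon)}_{(x^*,C^*)}\to 1$ and $q^{(\epsilon)}_{(y,C)}\to 0$ for $(y,C)\ne(x^*,C^*)$, hence convergence to $\delta_{(x^*,C^*)}$ in total variation.

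The one slightly delicate sub-case is $x^*\notin C$, where both numerator and denominator of the ratio $\gamma^{(\epsilon)}_y/\sum_z\gamma^{(\epsilon)}_z$ vanish; the ratio stabilizes at $1/|C|$ but is then multiplied by $w^{(\epsilon)}(C)=\epsilon$, so the contribution still vanishes. Once this bookkeeping is carried out, the extreme-point reduction closes the proof immediately. Notably, no step of the argument requires $\Piia$ to be convex itself, which is exactly the non-convexity warning the paper flags just before the fact.
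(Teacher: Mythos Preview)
Your proposal is correct and follows essentially the same approach as the paper: the paper's proof simply asserts that $\Piia$ contains distributions arbitrarily close to the ``corners'' of the choice-system simplex and then invokes the fact that convex combinations of simplex corners fill the simplex. Your argument is the fully fleshed-out version of exactly this sketch, supplying the explicit $(\gamma^{(\epsilon)},w^{(\epsilon)})$ construction and the case analysis the paper omits.
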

		\begin{proof}
			The proof of the statement is straightforward, using only the idea that $\Piia$ contains distributions that arbitrarily close to the ``corners'' of the $\textit{choice system}$ simplex. The statement then immediately holds, since convex combinations of simplex corners occupy the entire simplex. 
		\end{proof}
		
		\begin{lemma}\label{lemma:iia_invariance}
			Given two choice systems $q_1$ and $q_2$ defined over the same collection $\mathcal{C}$ such that 
			$$
			\sum_{j \in C} {q_1}_{(j,C)} = \sum_{j \in C} {q_2}_{(j,C)} = \mu_C, \ \ 
			\forall C \in \mathcal{C},$$ and 
			$$\sum_{\substack{C \in \mathcal{C}\\ C \ni i}} {q_1}_{(i,C)} = \sum_{\substack{C \in \mathcal{C}\\ C \ni i}} {q_2}_{(i,C)} = \alpha_i, \ \ 
			\forall i \in \mathcal{X},$$
			then, 
			$$
			\argmin_{p \in \Piia} 
			D_{\text{KL}}(q_1||p) = 
			\argmin_{p \in \Piia} 
			D_{\text{KL}}(q_2||p).
			$$
		\end{lemma}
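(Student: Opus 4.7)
The plan is to show that the objective $D_{\text{KL}}(q \| p)$, viewed as a function of $p \in \Piia$, depends on $q$ only through the two marginal sums stated in the hypothesis. The key will be writing the KL divergence in a form that exposes these marginals as sufficient statistics for the IIA family, much in the spirit of the sufficient statistics of an exponential family.

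First I would expand
\begin{align*}
D_{\text{KL}}(q \| p) \;=\; \sum_{(j,C)} q_{(j,C)} \log q_{(j,C)} \;-\; \sum_{(j,C)} q_{(j,C)} \log p_{(j,C)},
\end{align*}
noting that the first term (the negative entropy of $q$) does not depend on $p$, so the argmin over $p \in \Piia$ is identical to the argmax of $\sum_{(j,C)} q_{(j,C)} \log p_{(j,C)}$. Next, I would use the IIA ratio representation established earlier in the paper, which gives $p_{(j,C)} = w_C \, \gamma_j / \sum_{k \in C} \gamma_k$ for some $w \in \Delta_m$ and $\gamma \in \Delta_n$, and expand the log as $\log p_{(j,C)} = \log w_C + \log \gamma_j - \log \sum_{k \in C} \gamma_k$.

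Substituting and collecting terms by summation variable, the expected log-likelihood becomes
\begin{align*}
\sum_{(j,C)} q_{(j,C)} \log p_{(j,C)}
\;=\; \sum_{C \in \mathcal{C}} \Bigl(\sum_{j \in C} q_{(j,C)}\Bigr) \log w_C
\;+\; \sum_{i \in \mathcal{X}} \Bigl(\sum_{\substack{C \ni i}} q_{(i,C)}\Bigr) \log \gamma_i
\;-\; \sum_{C \in \mathcal{C}} \Bigl(\sum_{j \in C} q_{(j,C)}\Bigr) \log \sum_{k \in C} \gamma_k.
\end{align*}
By the hypothesis, every parenthesized marginal in this expression equals $\mu_C$ or $\alpha_i$ for both $q_1$ and $q_2$. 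Therefore the entire objective (as a function of $(w,\gamma)$, and hence of $p \in \Piia$) is \emph{identical} for $q_1$ and $q_2$, and in particular shares the same argmin.

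The only remaining subtlety is well-definedness of the projection: $\Piia$ is non-convex, so the argmin might fail to be a singleton. I would address this by interpreting the equality of argmins set-theoretically, i.e., showing that $p^\star$ achieves the minimum for $q_1$ if and only if it does for $q_2$, which follows directly from the identity of the two objectives above. No serious obstacle is expected; the argument is essentially a sufficient-statistics observation: $\{\mu_C\}_{C \in \mathcal{C}}$ and $\{\alpha_i\}_{i \in \mathcal{X}}$ are sufficient statistics for the IIA log-likelihood, and any two choice systems agreeing on these marginals induce the same KL projection onto $\Piia$.
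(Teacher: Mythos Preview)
Your proposal is correct and follows essentially the same approach as the paper: expand $D_{\text{KL}}(q\|p)$, drop the $p$-independent entropy term, parameterize $\Piia$ via $(w,\gamma)$, and regroup the cross-entropy so that $q$ enters only through the marginals $\mu_C$ and $\alpha_i$. The only cosmetic difference is that the paper additionally optimizes out $w$ explicitly (obtaining $w^\star_C=\mu_C$ via an entropy identity) before collecting terms, whereas you keep $(w,\gamma)$ joint throughout; your route is slightly more direct and avoids that auxiliary step without any loss.
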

		\begin{proof}
			From the definition of KL divergence and IIA, and removing terms that don't affect the argument, we have
			\begin{align*}
			\text{arg} \inf_{p \in \Piia} D_{KL}(q||p) 
			&= \text{arg} \inf_{p \in \Piia} \sum_{C \in \mathcal{C}} \sum_{j \in C} q_{(j,C)} \log\Big(\frac{q_{(j,C)}}{p_{(j,C)}}\Big)\\
			&= \text{arg} \inf_{p \in \Piia} \sum_{C \in \mathcal{C}} \sum_{j \in C} q_{(j,C)} \log(q_{(j,C)}) - \sum_{C \in \mathcal{C}} \sum_{j \in C} q_{(j,C)} \log(p_{(j,C)})\\
			&=\text{arg} \inf_{w \in \Delta_m, \theta \in \mathbb{R}^n}  
			- \sum_{C \in \mathcal{C}} \sum_{j \in C} q_{(j,C)} \log\Big(\frac{w_C \exp(\theta_j)}{\sum_{k \in C} \exp(\theta_k)}\Big).
			\end{align*}
			We can further rewrite this expression using properties of logarithms and rearranging terms:
			\begin{align*}
			\text{arg} \inf_{p \in \Piia} D_{KL}(q||p)
			&=\text{arg} \inf_{w \in \Delta_m, \theta \in \mathbb{R}^n} 
			- \sum_{C \in \mathcal{C}} \log(w_C)\sum_{j \in C} q_{(j,C)} - \sum_{C \in \mathcal{C}} \sum_{j \in C} q_{(j,C)} \log\Big(\frac{\exp(\theta_j)}{\sum_{k \in C} \exp(\theta_k)}\Big) \\
			&=\text{arg} \inf_{w \in \Delta_m, \theta \in \mathbb{R}^n} 
			- \sum_{C \in \mathcal{C}} \sum_{k \in C} q_{(k,C)}\log(w_C) - \sum_{C \in \mathcal{C}} \sum_{j \in C} q_{(j,C)} \log\Big(\frac{\exp(\theta_j)}{\sum_{k \in C} \exp(\theta_k)}\Big).
			\end{align*}
			Next we notice that we can optimize over $w$ by applying Fact \ref{max_entropy} (see below), that is, $w^\star = \sum_{k \in C} q_{(k,C)}$, and a few additional steps of algebra:
			\begin{align*}
			\text{arg} \inf_{p \in \Piia} D_{KL}(q||p)
			&=\text{arg} \inf_{\theta \in \mathbb{R}^n} 
			- \sum_{C \in \mathcal{C}} \sum_{j \in C} q_{(j,C)} \log\Big(\frac{\exp(\theta_j)}{\sum_{k \in C} \exp(\theta_k)}\Big)\\
			&= \text{arg} \inf_{\theta \in \mathbb{R}^n} - \sum_{C \in \mathcal{C}} \sum_{j \in C} \theta_j q_{(j,C)} + \sum_{C \in \mathcal{C}} \sum_{j \in C} q_{(j,C)} \log(\sum_{k \in C} \exp(\theta_k))\\
			&= \text{arg} \inf_{\theta \in \mathbb{R}^n} - \sum_{j \in \mathcal{X}} \theta_j \sum_{\substack{C \in \mathcal{C}\\ C \ni j}} q_{(j,C)} + \sum_{C \in \mathcal{C}} \log(\sum_{k \in C} \exp(\theta_k)) \sum_{j \in C} q_{(j,C)}.
			\end{align*}
			
			Now, we see that, 
			\begin{align*}
			\text{arg} \inf_{p \in \Piia} D_{KL}(q_1||p) 
			&= \text{arg} \inf_{\theta \in \mathbb{R}^n} - \sum_{j \in \mathcal{X}} \theta_j \alpha_j + \sum_{C \in \mathcal{C}} \mu_C \log(\sum_{k \in C} \exp(\theta_k)) \\
			&= \text{arg} \inf_{p \in \Piia} D_{KL}(q_2||p).
			\end{align*}
			This result concludes the proof.
		\end{proof}
		
			\begin{fact}\label{max_entropy}
			For any $q \in \Delta_s$, 
			\begin{align}
			\inf_{p \in \Delta_s}
			-\sum_{i=1}^s q_i \log(p_i) 
			=  H(q),
			\end{align}
			where $H(q)$ denotes the entropy of $q$.
		\end{fact}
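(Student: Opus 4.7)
The plan is to prove this classical Gibbs-type inequality by decomposing the cross-entropy objective into an entropy term plus a KL divergence term, and then appealing to the non-negativity of KL divergence. This rewriting isolates the $p$-dependence in a single, non-negative quantity whose minimizer is known.

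First I would add and subtract $\sum_i q_i \log q_i$ inside the objective to write
$$-\sum_{i=1}^s q_i \log p_i = -\sum_{i=1}^s q_i \log q_i + \sum_{i=1}^s q_i \log \frac{q_i}{p_i} = H(q) + D_{\mathrm{KL}}(q \Vert p),$$
with the usual convention that $0 \log 0 = 0$ and $0 \log(0/0) = 0$. The first term does not depend on $p$, so the minimization over $p \in \Delta_s$ reduces to minimizing $D_{\mathrm{KL}}(q \Vert p)$.

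Next I would invoke Gibbs' inequality, namely $D_{\mathrm{KL}}(q \Vert p) \geq 0$ for all $p \in \Delta_s$, with equality if and only if $p = q$. This is the standard consequence of Jensen's inequality applied to the strictly concave function $\log$: one has $\sum_i q_i \log(p_i/q_i) \leq \log \sum_i p_i = 0$, where the sum is taken over indices with $q_i > 0$ and the inequality $\sum_{i:q_i>0} p_i \leq 1$ is used. Strict concavity then gives the equality condition $p = q$. Substituting $p = q$ into the decomposition above yields the claimed infimum value $H(q)$, and the infimum is in fact attained.

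The proof has no real obstacle — the only item requiring minor care is the convention for $0 \log 0$ and the handling of indices with $q_i = 0$ (where $p_i$ is unconstrained by the objective beyond the simplex constraint), but these are resolved by the standard conventions and by noting that any mass placed on such coordinates by $p$ only increases the KL divergence. Thus the infimum equals $H(q)$ as stated.
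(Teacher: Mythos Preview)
Your proof is correct and follows essentially the same approach as the paper: both add and subtract $\sum_i q_i \log q_i$ to rewrite the objective as $H(q) + D_{\mathrm{KL}}(q\Vert p)$ and then invoke non-negativity of the KL divergence with equality at $p=q$. Your version supplies slightly more detail (the Jensen justification and the $0\log 0$ conventions), but the argument is the same.
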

		\begin{proof}
			\begin{align*}
			\inf_{p \in \Delta_s} -\sum_{i=1}^s q_i \log(p_i) &= \inf_{p \in \Delta_s} -\sum_{i=1}^s q_i \log(p_i) + \sum_{i=1}^s q_i \log(q_i) - \sum_{i=1}^s q_i \log(q_i)\\ 
			&= \inf_{p \in \Delta_s} \sum_{i=1}^s q_i \log\Big(\frac{q_i}{p_i}\Big)+ \sum_{i=1}^s q_i \log\Big(\frac{1}{q_i}\Big)\\
			&= \inf_{p \in \Delta_s} D_{KL}(q||p) + H(q)\\
			&=H(q),
			\end{align*}
			where the last line follows from the non-negativity of KL divergence, and it being zero if and only if $p = q$. 
		\end{proof}
		
\subsection{Lemmas and facts: cycle structure}
		
		\begin{lemma}\label{lemma:general_cycle_decomposition}
			Given an arbitrary graph $G$ with $n$ vertices and $d$ edges, there exists a cycle decomposition of the edge set into cycles of size at most $2\log_2(n)$ with at most $2n$ edges remaining.
		\end{lemma}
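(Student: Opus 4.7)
My plan is to give a constructive peeling-and-extract algorithm: repeatedly find and delete a short cycle from $G$, terminating once the remaining edge count drops to at most $2n$. The correctness reduces to a single subclaim: any graph on at most $n$ vertices with strictly more than $2n$ edges contains a simple cycle of length at most $2\log_2 n$. Given this subclaim, the outer loop is immediate: while more than $2n$ edges remain, find such a cycle, add it to the decomposition, and delete its edges from the graph. Each extracted cycle has at least three edges and is automatically edge-disjoint from the others (since its edges are removed immediately), so the loop terminates in at most $d/3$ iterations with at most $2n$ leftover edges, exactly matching the lemma.

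For the subclaim, I would combine a ``peel to the $3$-core'' step with a Moore-type BFS argument. Given a graph $G'$ on at most $n$ vertices with more than $2n$ edges, iteratively delete any vertex of current degree at most $2$ along with its (at most two) incident edges. Each deletion costs one vertex and at most two edges, so exhausting the entire vertex set would delete at most $2n$ edges; since $G'$ begins with strictly more than $2n$, the peeling halts before the vertex set empties and leaves a non-empty subgraph $H$ with minimum degree at least $3$. Within $H$, pick any root $v$ and run BFS, writing $L_i$ for the layer at depth $i$. Assuming $H$ has no cycle of length at most $2K$, the BFS to depth $K$ embeds injectively: any same-level edge, or any configuration in which two distinct $L_i$ vertices share a neighbor in $L_{i+1}$, would close a cycle of length at most $2K$. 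The minimum-degree-$3$ condition then forces $|L_1|\ge 3$ and $|L_{i+1}|\ge 2|L_i|$, so $\sum_{i=0}^{K} |L_i| \ge 3\cdot 2^{K} - 2$. Setting $K = \lceil \log_2 n \rceil$ makes this sum exceed $n \ge |V(H)|$, contradicting the assumed absence of a short cycle and producing one of length at most $2K \le 2\log_2 n$ (up to the standard integer rounding).

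The main obstacle I expect is the BFS-level bookkeeping, specifically carefully ruling out all the ways a forbidden cycle could be hiding in the tree --- same-level edges, non-tree edges causing two $L_i$ vertices to share an $L_{i+1}$ neighbor via a lowest common ancestor, etc. --- in order to cleanly justify $|L_{i+1}|\ge 2|L_i|$. Everything else is essentially routine: the peeling is a standard monovariant argument (vertex count strictly decreases and edge count decreases by at most $2$), and the outer loop's termination is immediate. The cosmetic $2\log_2 n$ versus $2\lceil \log_2 n\rceil$ discrepancy in the BFS calculation I would handle either by absorbing it into the downstream constants in Lemma \ref{lemma:Gc_cycle_decomposition} and Corollary \ref{cor:global_lower_bound}, or by noting that for $n \le 3$ the hypothesis $d > 2n$ is vacuous for simple graphs and for $n \ge 4$ the integer-valued cycle length satisfies $2\lceil\log_2 n\rceil \le 2\log_2 n$ rounded up to the next integer.
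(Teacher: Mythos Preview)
Your proposal is correct and follows the same peel-and-BFS strategy as the cited reference \cite{chu2018graph} (the paper's own proof of this lemma is simply that citation); indeed, your argument closely mirrors the paper's bipartite adaptation in Lemma~\ref{lemma:bipartite_cycle_decomposition}. The only organizational difference is that you isolate short-cycle existence as a standalone subclaim invoked by an outer loop, whereas the cited argument and the paper's bipartite version interleave low-degree peeling with cycle extraction in a single procedure, but both routes yield the same $2n$-edge remainder and $2\log_2 n$ cycle-length bounds.
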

		\begin{proof}
			See \cite{chu2018graph}, Theorem 3.10 and its proof in Section 8 therein.
		\end{proof}
		\begin{lemma}\label{lemma:bipartite_cycle_decomposition}
			Given a bipartite graph $G = (V_1, V_2, E)$ with $n_1 = |V_1|$ and $n_2 = |V_2|$ vertices in each part (wlog $n_2 \geq n_1$) and $d = |E|$ edges, there exists a cycle decomposition of the edge set into cycles of size at most $2 \floor{2 \log_2(n_1)}$ with at most $\min\{2n_1 + n_2, 4n_1 + n_2^\text{(odd)}\}$ edges remaining, where $n_2^\text{(odd)} \leq n_2$ represents the number of vertices of odd degree on the second vertex set.
		\end{lemma}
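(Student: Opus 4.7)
The plan is to reduce the bipartite cycle decomposition problem to Lemma \ref{lemma:general_cycle_decomposition} applied to an auxiliary multigraph supported only on the smaller side $V_1$, so that both the cycle length and the residual scale with $n_1$ rather than with $n_1 + n_2$. The key construction is a pairing of edges at the vertices of $V_2$: for each $v \in V_2$ of degree $d_v$, arbitrarily partition its incident edges into $\lfloor d_v/2 \rfloor$ unordered pairs, leaving exactly one edge unpaired when $d_v$ is odd. Exactly $n_2^{(\text{odd})}$ bipartite edges are set aside in this step. Each pair $\{uv, vw\}$ then becomes a single edge $uw$ in an auxiliary multigraph $H$ on vertex set $V_1$, so that $H$ has $n_1$ vertices and $(d - n_2^{(\text{odd})})/2$ edges.

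Next I would apply Lemma \ref{lemma:general_cycle_decomposition} to $H$, obtaining cycles of length at most $2\log_2(n_1)$ covering all but at most $2n_1$ edges of $H$. Each simple $H$-cycle of length $\ell$ lifts to a closed walk in $G$ of length $2\ell$ by expanding each edge $uw$ in the cycle into the length-$2$ path $u-v-w$ through its originating $V_2$ vertex. The lifted walks are edge-disjoint in $G$ because the pairings at distinct $V_2$ vertices, and the pairs within a given $V_2$ vertex, are themselves disjoint; moreover, because each $V_2$ vertex in a lifted walk is entered and exited through a matched pair, the walk has even degree at every vertex and decomposes into simple $G$-cycles of length at most $2\ell \leq 4 \log_2(n_1) \leq 2 \lfloor 2 \log_2(n_1) \rfloor$, matching the claimed cycle-length bound up to integer rounding. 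Counting residual $G$-edges gives at most $n_2^{(\text{odd})}$ unpaired singletons plus at most $2 \cdot 2 n_1$ from lifting the $H$-residual, for a total of $4 n_1 + n_2^{(\text{odd})}$, which is the second term inside the minimum.

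The first term $2 n_1 + n_2$ in the minimum is the more delicate bound and is tighter precisely when many $V_2$ vertices have odd degree, in which case the naive pairing wastes edges through the factor-of-two blow-up inherent to the $H$-to-$G$ lift of the $2 n_1$ $H$-residual. To obtain it, I would sharpen the preprocessing before pairing: at the cost of at most $n_2$ discarded bipartite edges (at most one per $V_2$ vertex), extract a spanning subgraph of $G$ whose $V_2$ vertices all have even degree, for instance by fixing an arbitrary matching on the odd-degree $V_2$ vertices and deleting one edge from a short trail joining each matched pair. The pairing on this Eulerian-at-$V_2$ skeleton is then lossless, and Lemma \ref{lemma:general_cycle_decomposition} applied to the resulting multigraph $H'$ contributes $2 n_1$ residual $H'$-edges; folding these back into $2 n_1$ $G$-edges (rather than $4 n_1$) is the step that requires the most care. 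I expect this to be the main obstacle, since saving the factor of two demands either a stronger short-cycle-decomposition statement in the Eulerian multigraph case or an extra combinatorial step that absorbs leftover $H'$-edges into cycles that also use the preprocessing residual, all without increasing the maximum cycle length. The remainder of the argument — the pairing setup, the lifting to $G$, the parity-based decomposition of lifted walks into simple cycles, and the residual accounting — is routine bookkeeping once the base construction and its strengthening are in place.
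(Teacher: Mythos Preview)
Your reduction via pairing at $V_2$ and contracting to a multigraph $H$ on $V_1$ is a genuinely different route from the paper's, and it cleanly delivers the $4n_1 + n_2^{(\text{odd})}$ term: the lifting and Euler-decomposition of closed walks are correct, and since cycle lengths in $H$ are integers the bound $2\ell \le 2\lfloor 2\log_2(n_1)\rfloor$ holds (your intermediate inequality $4\log_2(n_1) \le 2\lfloor 2\log_2(n_1)\rfloor$ is written backwards, but the conclusion is right for the reason you note). One point to tighten is that $H$ is in general a \emph{multigraph} with loops and parallel edges, so you should either check that the short-cycle decomposition of Lemma~\ref{lemma:general_cycle_decomposition} applies to multigraphs, or strip loops and $2$-cycles first (these lift to $G$-cycles of length $2$ and $4$ and cost nothing).

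The gap you flag for the $2n_1 + n_2$ term is real, and I do not see how to close it within your framework: each residual $H'$-edge \emph{is} two $G$-edges, so a $2n_1$ residual in $H'$ unavoidably becomes $4n_1$ in $G$, and your preprocessing already spends up to $n_2$. The paper sidesteps this entirely by \emph{not} reducing to Lemma~\ref{lemma:general_cycle_decomposition}. It works directly on $G$: iteratively delete $V_1$-vertices of degree $\le 2$ and $V_2$-vertices of degree $\le 1$, then run BFS from any surviving $V_1$-vertex. After pruning, $V_1$ has minimum degree $3$ and $V_2$ minimum degree $2$, so the BFS tree branches (at least) binarily each time it returns to the $V_1$ side, forcing a cycle of length at most $2\lfloor 2\log_2(n_1)\rfloor$. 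The only edges ever discarded are those incident to pruned vertices, giving the $2n_1 + n_2$ bound immediately; the $4n_1 + n_2^{(\text{odd})}$ bound is then a parity refinement of the same count (a $V_2$-vertex can only be pruned at degree $1$, which requires it either started odd or had its parity flipped by a $V_1$-deletion, and at most $2n_1$ such flips can occur). So in the paper's argument the two terms of the $\min$ arise in the opposite order from yours, and the direct approach is what makes the tighter constant on the $n_1$ side accessible.
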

		\begin{proof}
			The proof follows the same general structure as that of Lemma \ref{lemma:general_cycle_decomposition}, but modifies a few steps to directly incorporate properties of the bipartite graph, and therefore produces results more specific to bipartite graphs. We begin by stating a procedure to construct a cycle decomposition on an arbitrary bipartite graph $G$ with some edges remaining. 
			
			First, repeatedly remove vertices, along with their incident edges, of degree 2 or less from $V_1$ and of degree 1 or less from $V_2$. Second, perform a breadth-first search (BFS) starting from any remaining vertex in $V_1$ until a cycle $\sigma_1$ is found. Add the cycle to the collection of cycles $\sigma$, and remove the edges from $G$. Repeat these two steps, adding cycles $\sigma_i$ to $\sigma$ until no edges remain in $G$. 
			
			Now, we can analyze this procedure. Note that the only edges removed from $G$ that are also not added to $\sigma$ are from vertices of degree 2 or less from $V_1$ and from vertices of degree 1 or less from $V_2$. This fact allows us to upper bound the number of edges not contained in the cycle decomposition by $2n_1 + n_2$. 
			
			Now, consider any instance of the second step of the procedure above. Since all vertices of degree less than 2 have been removed from $V_1$, $V_1$ has vertices of minimum degree 3. Similarly, since all vertices of degree less than 1 have been removed from $V_2$, $V_2$ only has vertices of minimum degree 2. Thus, the arbitrary vertex in $V_1$ at the start of the BFS is connected to at least three vertices in $V_2$. Those three verticies, are in turn connected to at least one new vertex each in $V_1$ if a cycle is not found, all of which are in turn connected to two new vertices each in $V_2$ if a cycle is not found, and so on. Thus, as long as we don't find a cycle, the BFS tree grows at least as a complete binary tree does every time we go to $V_2$. Since there are no leaf nodes in this graph (no vertex has degree 1), a cycle must eventually be found. The maximum depth of the BFS tree is then at most $\floor{2 \log_2(n_1)}$, twice the depth of a complete binary tree since that is the maximum depth before all nodes in $V_1$ are visited and a cycle has to be found. The cycle can then be at most twice the depth of the BFS tree, which upper bounds the size of the cycle as $2\floor{2 \log_2(n_1)}$. Thus, the key difference in this step of the proof when contrasted with that of Lemma \ref{lemma:general_cycle_decomposition} is using $V_2$ as a bridge back to $V_1$ and using $V_1$ only to exponentially grow the BFS tree. The analytical segmentation of the vertices then allows a bound on the size of the cycle explicitly in terms of the number of vertices of $V_1$, which is useful when the vertices are highly imbalanced in the two parts.
			
			Since both steps can always finish as long as $G$ has edges and the second always strictly reduces the number of edges of $G$, the algorithm eventually terminates and produces a cycle decomposition with cycles of size at most $2\floor{2 \log_2(n_1)}$ with at most $2n_1 + n_2$ edges that remain. 
			
			We close by furnishing the proof for the slightly sharper bound on the edges that remain that is stated in the Lemma. Consider first that removing the edges corresponding to a cycle in a graph modifies the degree of every vertex by an even number, and hence does not change the parity of the vertex's degree. Next, consider that during the first step, edges are removed due to the removal of vertices in $V_2$ only if those vertices are degree 1. Thus, vertices in $V_2$ must either begin with odd parity in order for one of their edges to not contribute to the cycle decomposition, or they must be changed to odd parity due to edges removed from the removal of vertices from $V_1$ in the first step. Since vertices removed from $V_1$ in the first step can have degree at most 2, their removal changes the parity of at most $2n_1$ vertices in $V_2$. This argument yields an alternate upper bound of edges removed that do not contribute to the cycle decomposition, $2n_1 + 2n_1 + n_2^\text{(odd)}$, and concludes the proof. 
		\end{proof}
		
		\begin{lemma}\label{lemma:Gc_cycle_decomposition}
			Any Eulerian comparison incidence graph $G_\mathcal{C}$, defined in Section \ref{subsection:comparison_graph}, has a cycle decomposition $\sigma$ where all but at most $\min(2n+m, 4n)$ edges (i.e., at most $\max(d-\min(2n+m, 4n), 0)$ edges) contribute to cycles of size at most $2\floor{2 \log_2(n)}$ and the rest (at most $\min(2n+m, 4n)$) contribute to cycles of size at most $2n$. We then have the following bounds on $\mu(\sigma)$ and $\alpha(\sigma)$: 
			\begin{align*}
			\mu(\sigma) \leq \min \Big\{\Big(\frac{d}{d - 4n + 8\log_2 (n)}\Big)4\log_2 (n), 2n\Big\} \\
			\alpha(\sigma) \leq \min \Big\{4 \log_2(n) + \frac{4n(2n - 4 \log_2(n))}{d}, 2n\Big\}.
			\end{align*}
		\end{lemma}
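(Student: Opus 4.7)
The plan is to apply Lemma~\ref{lemma:bipartite_cycle_decomposition} to $G_\mathcal{C}$, use the Eulerian hypothesis to handle the leftover edges, and then combine the resulting two-tier cycle decomposition into bounds on $\mu(\sigma)$ and $\alpha(\sigma)$ by straightforward accounting.

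First, I would invoke Lemma~\ref{lemma:bipartite_cycle_decomposition} with $V_1$ the $n$ item-nodes and $V_2$ the $m$ set-nodes. Because $G_\mathcal{C}$ is Eulerian, every node in $V_2$ has even degree and so $n_2^{(\text{odd})} = 0$, reducing the leftover-edge bound $\min\{2n_1 + n_2,\, 4n_1 + n_2^{(\text{odd})}\}$ to $\min(2n + m,\, 4n)$. This produces a collection $S$ of short cycles, each of length at most $2\lfloor 2 \log_2 n \rfloor \le 4\log_2 n$, covering all but at most $r \le \min(2n+m, 4n)$ edges.

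Next, I would observe that the leftover $r$ edges induce an Eulerian subgraph (removing the edges of a cycle preserves the parity of every vertex degree), so they can be iteratively peeled into a collection $L$ of simple cycles. The pigeonhole argument already recorded for $G_\mathcal{C}$ in Section~\ref{subsection:comparison_graph}---any cycle longer than $2n$ must revisit some item-node and can be split---ensures every cycle in $L$ has length at most $2n$. Take $\sigma = S \cup L$.

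The bounds on $\mu(\sigma)$ and $\alpha(\sigma)$ are then straightforward counting. For $\mu(\sigma) = d/|\sigma|$, lower-bound $|\sigma|$ using $|\sigma_i| \le 4\log_2 n$ for $\sigma_i \in S$ and $|\sigma_i| \le 2n$ for $\sigma_i \in L$, giving $|\sigma| \ge (d-r)/(4\log_2 n) + r/(2n)$, and substitute the worst-case $r = 4n$; the $2n$ branch of the $\min$ comes from the trivial observation that every cycle has length at most $2n$. For $\alpha(\sigma) = \tfrac{1}{d}\sum_i |\sigma_i|^2$, upper-bound each square by $(\text{cycle-length bound}) \cdot |\sigma_i|$ separately on the two parts of $\sigma$ to get $\sum_i |\sigma_i|^2 \le 4\log_2(n)(d-r) + 2nr$, then again substitute $r = 4n$; the $2n$ branch of the $\min$ comes from $|\sigma_i|^2 \le 2n\,|\sigma_i|$ summed over all cycles.

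No step is particularly difficult once Lemma~\ref{lemma:bipartite_cycle_decomposition} is in hand; the only subtlety is verifying that the worst-case $r$ for the $\mu$ bound (where we want $|\sigma|$ small) and for the $\alpha$ bound (where we want $\sum |\sigma_i|^2$ large) is in both cases $r = 4n$ whenever $n$ is large enough that $2n \ge 4\log_2 n$. Correctness of the exact expressions $\frac{d}{d-4n+8\log_2 n} \cdot 4\log_2 n$ and $4\log_2 n + \frac{4n(2n - 4\log_2 n)}{d}$ is then a routine algebraic simplification of the two substitutions.
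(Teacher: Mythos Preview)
Your proposal is correct and essentially identical to the paper's proof: both apply Lemma~\ref{lemma:bipartite_cycle_decomposition} with $V_1$ the item-nodes (using the Eulerian assumption to set $n_2^{(\text{odd})}=0$), decompose the leftover edges into cycles of length at most $2n$ via the pigeonhole argument, and then bound $\mu(\sigma)$ and $\alpha(\sigma)$ by counting with the worst-case leftover $r=4n$. The paper carries out exactly your two substitutions, arriving at $|\sigma|\ge \tfrac{d-4n}{4\log_2 n}+2$ and $\alpha(\sigma)\le \tfrac{1}{d}\big((d-4n)\cdot 4\log_2 n + 8n^2\big)$ before simplifying to the stated expressions.
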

		\begin{proof}
			Apply Lemma \ref{lemma:bipartite_cycle_decomposition} to $G_\mathcal{C}$, with $n_1 = n$ and $n_2 = m$. Since $G_\mathcal{C}$ is Eulerian, $n_2^\text{(odd)} = 0$, and so we know that $G_C$ has a cycle decomposition with cycles of size at most $2 \floor{2 \log(n_1)}$ with at most $\min(2n + m, 4n)$ edges that remain. Again since $G_\mathcal{C}$ is Eulerian, and removing cycles from $G_\mathcal{C}$ does not change this fact, the remaining $\min(2n + m, 4n)$ edges can always be decomposed into cycles of size at most $2n$. The latter bound on the cycle size follows from the application of the pigeonhole principle described in Section $\ref{sec:relax}$. When combined, these steps form the $sigma$ that possesses the properties stated in the Lemma.
			
			Naturally, the short cycles from Lemma \ref{lemma:bipartite_cycle_decomposition} are only guaranteed if $d$ exceeds $\min(2n + m, 4n)$, else we use the extremal bound on cycle size of $2n$ for all edges in $d$. Note that $2\floor{2 \log_2(n)} \leq 2n$ for all $n \in \mathbb{N}$, so the cycles from Lemma \ref{lemma:bipartite_cycle_decomposition} are always smaller than those guaranteed by the extremal bound. We then have the following lower bound on $|\sigma|$, the number of cycles in the cycle decomposition:
			\begin{align*}
			|\sigma| \geq \max \Big\{\frac{d - 2n - \min(2n, m)}{2\floor{2 \log_2(n)}} + \frac{2n + \min(2n, m)}{2n}, \frac{d}{2n}\Big\}.
			\end{align*}
			We replace $\min(2n,m)$ with $2n$ to weaken the above bound at the benefit of making the expression much simpler, especially for its use bounding $\mu(\sigma)$ and $\alpha(\sigma)$. We also weaken $2\floor{2 \log_2(n)}$ to $4 \log_2(n)$ for the same purpose. For the sharpest results, we invite the reader to carry the original bound over to $\mu(\sigma)$ and $\alpha(\sigma)$ in a manner similar to what we do below. We then have:
			\begin{align*}
			|\sigma| \geq \max \Big\{\frac{d - 4n}{4\log_2 (n)} + 2, \frac{d}{2n}\Big\}.
			\end{align*}
			Since $\mu(\sigma) = \frac{d}{|\sigma|}$, the preceding bound immediately yields an upper bound on $\mu(\sigma)$:
			\begin{align*}
			\mu(\sigma) \leq \frac{d}{\max \Big\{\frac{d - 4n}{4\log_2(n)} + 4, \frac{d}{2n}\Big\}} = \min \Big\{\Big(\frac{d}{d - 4n + 8\log_2 (n)}\Big)4\log_2 (n), 2n\Big\}.
			\end{align*}
			Now, we bound $\alpha(\sigma) = \frac{1}{d}\sum_{\sigma_i \in \sigma} |\sigma_i|^2$. Clearly, $\alpha(\sigma)$ grows with the size of the cycles in $\sigma$, so we may upper bound it by considering the largest size the cycles in $\sigma$ could be. Recall that $2\floor{2 \log_2(n)} \leq 2n$ for all $n \in \mathbb{N}$, so the cycles from Lemma \ref{lemma:bipartite_cycle_decomposition} are always smaller than those guaranteed by the extremal bound of $2n$. Thus, the more size-$2n$ cycles in our cycle decomposition, the higher $\alpha(\sigma)$. Since we can have at most $4n$ edges contribute to cycles of size at most $2n$, if $d \leq 4n$, then we have:
			\begin{align*}
			\alpha(\sigma) \leq \frac{1}{d} \frac{d}{2n} (2n)^2 = 2n.
			\end{align*}
			Otherwise, we have:
			\begin{align*}
			\alpha(\sigma) \leq \frac{1}{d} \Big(\frac{d - 4n}{2\floor{2 \log_2(n)}}(2\floor{2 \log_2(n)})^2 + \frac{4n}{2n}(2n)^2 \Big) \leq 4 \log_2(n) + \frac{4n(2n - 4 \log_2(n))}{d},
			\end{align*}
			where the first inequality follows from computing the formula with all $4n$ edges in cycles at their maximum size of $2n$ and the remaining edges in cycles at their maximum size of $2\floor{2 \log_2(n)}$. The second inequality comes from rearranging terms and weakening $2\floor{2 \log_2(n)}$ to $4 \log_2(n)$. Putting the two cases together, we have:
			\begin{align*}
			\alpha(\sigma) \leq \min \Big\{4 \log_2(n) + \frac{4n(2n - 4 \log_2(n))}{d}, 2n\Big\}.
			\end{align*}
		\end{proof}
		\begin{cor}[]\label{cor:all_subsets_cycle_decomposition}
			For the comparison graph of all subsets of even size described in Section \ref{subsection:all}, $\mu(\sigma)$ and $\alpha(\sigma)$ are bounded as follows: 
			\begin{align*}
			\mu(\sigma) &\leq 5\log_2 (n), \\
			\alpha(\sigma) &\leq 5\log_2 (n).
			\end{align*}
		\end{cor}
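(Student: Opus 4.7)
The plan is to directly substitute the known value of $d$ for the all-even-subsets comparison incidence graph into the general bounds from Lemma~\ref{lemma:Gc_cycle_decomposition} and verify that, thanks to the exponential size of $d$ relative to $n$, the correction terms become negligible and can be absorbed into a small multiplicative slack, yielding the clean $5\log_2(n)$ constant.

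Concretely, I would first recall from the setup in Section~\ref{subsection:all} that for the collection $\mathcal C$ of all even-sized subsets of an $n$-item universe, every item appears in exactly $2^{n-2}$ subsets, and hence $d = n 2^{n-2}$. I would also note that this $\mathcal C$ yields an Eulerian $G_\mathcal{C}$, so Lemma~\ref{lemma:Gc_cycle_decomposition} applies, giving
\begin{align*}
\mu(\sigma) &\leq \Bigl(\frac{d}{d - 4n + 8\log_2 n}\Bigr) 4\log_2 n, \\
\alpha(\sigma) &\leq 4 \log_2(n) + \frac{4n(2n - 4 \log_2 n)}{d}.
\end{align*}

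For the $\mu(\sigma)$ bound, the strategy is to show $\frac{d}{d - 4n + 8 \log_2 n} \leq \frac{5}{4}$, which is equivalent to $d \geq 20 n - 40 \log_2 n$. Since $d = n 2^{n-2}$, this holds comfortably for all $n \geq n_0$ for some small constant $n_0$, and the handful of remaining small cases can be verified directly (or absorbed by choosing $n_0$ appropriately and noting the statement is only meaningful for $n$ large enough that $\log_2 n > 0$). Multiplying through by $4 \log_2 n$ then yields $\mu(\sigma) \leq 5 \log_2 n$. For $\alpha(\sigma)$, I would simply bound the correction term: $\frac{4n(2n - 4 \log_2 n)}{d} \leq \frac{8 n^2}{n 2^{n-2}} = \frac{32 n}{2^n}$, which tends to zero and is bounded above by $\log_2 n$ for all sufficiently large $n$, so $\alpha(\sigma) \leq 4\log_2 n + \log_2 n = 5\log_2 n$.

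There is no serious obstacle here; the argument is essentially an algebraic check that the exponentially large $d$ dominates the linear terms $n$ and $\log_2 n$. The only piece requiring any care is fixing the threshold $n_0$ beyond which both inequalities hold with the $5/4$ slack, which can be handled by a one-line calculation since $n 2^{n-2}$ dwarfs $20n$ already for modest $n$. The resulting corollary is then an immediate consequence of Lemma~\ref{lemma:Gc_cycle_decomposition} specialized to $d = n 2^{n-2}$.
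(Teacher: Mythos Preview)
Your proposal is correct and follows essentially the same approach as the paper: substitute $d = n2^{n-2}$ into the bounds of Lemma~\ref{lemma:Gc_cycle_decomposition} and observe that the exponential size of $d$ renders the correction terms negligible, so that both bounds sit comfortably below $5\log_2 n$. The paper is slightly less explicit than you are about the algebra, simply noting that the expressions converge to $4\log_2 n$ and that a plot confirms the $5\log_2 n$ bound for all $n\ge 2$; your rewriting of the $\mu(\sigma)$ condition as $d\ge 20n-40\log_2 n$ and your bound $\tfrac{32n}{2^n}$ for the $\alpha(\sigma)$ correction term accomplish the same verification.
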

		\begin{proof}
			Applying Lemma \ref{lemma:Gc_cycle_decomposition} with $d = n2^{n-2}$ results in the following expressions:
			\begin{align*}
			\mu(\sigma) \leq \min \Big\{\Big(\frac{n2^{n-2}}{n2^{n-2} - 4n + 8\log_2 (n)}\Big)4\log_2 (n), 2n\Big\} \\
			\alpha(\sigma) \leq \min \Big\{4 \log_2(n) + \frac{4n(2n - 4 \log_2(n))}{n2^{n-2}}, 2n\Big\}.
			\end{align*}
			Clearly, as $n$ grows, the exponential growth of $d$ results in both bounds quickly converging to $4 \log_2(n)$. For small values of $n$, a simple plot immediately reveals that for $n \geq 2$, the expressions never exceed $5 \log_2(n)$. 
		\end{proof}
	\end{appendix}
	
\end{document}